\def\@maketitle{%
  \newpage
  \null
  \begin{center}%
  \let \footnote \thanks
    {\Large\bfseries \@title \par}%
    \vskip 1.5em%
    {\normalsize
      \lineskip .5em%
      \begin{tabular}[t]{c}%
        \@author
      \end{tabular}\par}%
    \vskip 0.8em%
    {\small \@date}%
  \end{center}%
  \par
  \vskip 1.5em}
\title{Classical theorems from analysis for locally band preserving functions on Dedekind complete $\Phi$-algebras}
\author{Eder Kikianty%
\thanks{Email: \texttt{eder.kikianty@wits.ac.za}}}
\affil[1]{School of Mathematics, University of Witwatersrand, Private Bag 3, WITS 2050, South Africa}
\author[2]{Luan Naude%
\thanks{Email: \texttt{u23969998@tuks.co.za}}}
\affil[2]{Department of Mathematics and Applied Mathematics, University of Pretoria, Private Bag X20, Hatfield 0028, South Africa}
\author{Mark Roelands%
\thanks{Email: \texttt{m.roelands@math.leidenuniv.nl}}}
\affil[3]{Mathematical Institute, Leiden University, 2300 RA Leiden,
The Netherlands}
\author[2]{Christopher Schwanke%
\thanks{Email: \texttt{cmschwanke26@gmail.com}}}
\date{\today}
\begin{document}

\maketitle

\vspace{-.8cm}

\begin{abstract}
{\scriptsize In this paper we explore the concept of locally band preserving functions, introduced by Ercan and Wickstead, on Dedekind complete $\Phi$-algebras. Specifically, we show that all super order differentiable functions are locally band preserving. Furthermore, some foundational results from classical analysis are proved in this setting, such as the Intermediate Value Theorem, the Extreme Value Theorem, and the Mean Value Theorem. Moreover, we show that these generalisations can fail for functions that are not locally band preserving. With the goal in mind to further develop the theory of complex differentiation in Dedekind complete complex $\Phi$-algebras, a complex version of the Mean Value Theorem is also provided.   }  
\end{abstract}

{\footnotesize {\bf Keywords:} differentiation, complex vector lattice, locally band preserving, Mean Value Theorem, Extreme Value Theorem, Intermediate Value Theorem} 

{\footnotesize {\bf Subject Classification:} Primary: 46A40; Secondary:  30A99}

\section{Introduction}

The theory of complex analysis on Banach spaces enjoys a long, well-known,
and fruitful history, which is summarised in the text \cite{Mujica}.
This theory generalises the
classical theory of functions of a complex variable by using norms as an abstraction of the
ordinary modulus on the complex plane.

Recently, the papers \cite{series, differentiation} took an alternative approach to complex analysis in abstract spaces: an \textit{order-theoretic} approach. Indeed, \cite{series} began this journey by proving an $n$th root test for the absolute order convergence of series in universally complete complex $\Phi$-algebras (unital $f$-algebras). From this result, Cauchy-Hadamard formulas for calculating the radius of convergence for power series in this setting were obtained. 

This order-theoretic complex analysis was furthered in \cite{differentiation}, where the authors introduced two notions of differentiability built from the notion of order convergence in Dedekind complete $\Phi$-algebras: \textit{order differentiability} and the much stronger \textit{super order differentiability}.

Such results enable one to meaningfully investigate analytical properties of certain spaces which do not have a natural norm, such as $C^\infty_{\mathbb{C}}(X)$. They furthermore illustrate how fundamental concepts in vector lattice theory, such as weak order units and band projections, have purposeful roles to play in complex analysis. Thirdly, this theory, through analogue, sheds light on an order-theoretical \textit{real} analysis counterpart. It is to this real analysis complement that we devote much of our attention in this paper.

In Sections 2 and 3 we introduce $\epsilon$-$\delta$ reformulations of order continuity, order differentiability, and super order differentiability, with the goal in mind to have the results in this paper resemble the classical theory. 

Using these reformulations, the objective of Sections 4 and 5 is to prove analogues of a Boundedness Theorem, an Intermediate Value Theorem, an Extreme Value Theorem, and a Mean Value Theorem in Dedekind complete \textit{real} $\Phi$-algebras. It turns out, however,  that order continuity on order intervals is insufficient to prove these analogues. Therefore, in Section~4 we consider \textit{locally band preserving functions}, which were introduced by Ercan and Wickstead in \cite{Wickstead-Zafer}. The reader can also infer the similarities between locally band preserving functions and functions with the so-called local property, introduced by Avil\'{e}s and Zapata in \cite{Aviles-Zapata}, where $\Phi$-algebras of measurable functions are considered. These locally band preserving functions prove to be a natural class of functions to consider, since it will be shown that all super order differentiable functions are locally band preserving. Furthermore, it is also demonstrated that the Boundedness Theorem, the Intermediate Value Theorem, and the Extreme Value Theorem can fail for functions that are not locally band preserving.

In Section~5 we prove the analogues of the aforementioned classical theorems for order continuous and locally band preserving functions defined on a closed order interval. Using our Extreme Value Theorem, we then provide an analogue of Rolle's Theorem and of the Mean Value Theorem. As a consequence, a super order differentiable function that has derivative identically zero on an order open interval is constant on said interval. Additionally, we illustrate through example that (i) this last result can fail for functions that are order differentiable but not super order differentiable, and (ii) that an order differentiable function need not be locally band preserving. 

Finally, in order to further develop the theory of complex differentiation in Dedekind complete complex $\Phi$-algebras, a complex version of the Mean Value Theorem is proved. Consequently, any complex super order differentiable function with zero derivative on an order neighbourhood must be constant.     

\section{Preliminaries}

The reader is referred to the standard texts \cite{aliprantis, zaanen1, depagter, babyzaanen} for any unexplained terminology or basic results in vector lattice theory.

A $\Phi$-algebra is an $f$-algebra with a multiplicative unit, which will be denoted by $e$. Given a real vector lattice $F$, the vector space complexification $E := F + i F$ is equipped with a modulus  

\begin{equation}\label{e:modulus}
|x + iy| := \sup\{(\cos\theta)x + (\sin\theta)y \colon \theta \in [0,2\pi]\} \quad (x,y \in F),
\end{equation}
that extends the absolute value on $F$, provided that $F$ is closed under the supremum in \eqref{e:modulus}. In this case we call $E$ a complex vector lattice. See e.g. \cite[Section~2]{Schipper} for more details. Naturally, we say that a complex vector lattice $E=F+iF$ is Dedekind complete (respectively, has the projection property, respectively has sufficiently many projections) if $F$ is Dedekind complete (respectively, has the projection property, respectively has sufficiently many projections). In the case that $F$ is a $\Phi$-algebra, the multiplication on $F$ canonically extends to all of $E$. Thus, it is fitting for $E$ to be called a complex $\Phi$-algebra. Note that the multiplication on Archimedean $\Phi$-algebras is commutative by \cite[Theorem~2.56]{aliprantis}. We call $F$ the real part of $E$ and also write $E_\rho := F$. Furthermore, we define $E^+ := E_\rho^+$. For $x,y \in E_\rho$, we define $\Re (x + iy) := x$ and $\Im (x + iy) := y$, as usual.  It immediately follows from \eqref{e:modulus} that, for $z \in E$, we have $|\Re(z)|, |\Im(z)| \le |z|$. The reader can refer to \cite{series, differentiation} for more information about complex $\Phi$-algebras. The bands in $E$ are defined to be the complexifications of the bands in $F$, that is, $B + iB$ where $B$ is a band in $F$, see \cite[Chapter~6]{babyzaanen}. For $x\in E$, the band generated by $x$ in $E$ is the smallest band in $E$ that contains $x$ and is denoted by $B_x$. We note that $B_x$ equals the complexification of the band generated by $|x|$ in $F$. 

Often in this paper, we will consider a Dedekind complete $\Phi$-algebra $E$ that can either be real or complex. In the case that $E$ is taken to be real, we set $E_\rho := E$, and for $x \in E$, the modulus $|x|$ is the ordinary absolute value of $x$.
 
\subsection{Band decompositions from inequalities}

Because Dedekind completeness implies the projection property, every band $B$ in $E$ is a projection band. This fact means that $E = B \oplus B^d$, and, associated with $B$, there is a band projection $\bP\colon  E \to B$. The reader may assume that any notation used to denote a band carries over to its band projection (replacing the $B$ with $ \bP$), e.g. the band projections of $B$, $B_1$, and $B_x$ will be written as $\bP$, $\bP_1$, and $\bP_x$ respectively. 

We record some basic facts about bands, band projections, and weak order units in \Cref{p:properties_bands} below that we will repeatedly use throughout this paper. The proof of $(i)$ follows from \cite[Theorem~2.37]{aliprantis}, and statement $(ii)$ is a consequence of $(i)$. Statement $(iii)$ follows from \cite[Theorem~2.44]{aliprantis} and \cite[Theorem~18.13]{babyzaanen}, with $(iv)$ as a consequence. Statement $(v)$ follows from \cite[Theorem~1.45]{aliprantis}, and a proof of statement $(vi)$ can be found in \cite[Theorem~32.5]{babyzaanen}. Since the band generated by $x$ contains the band generated by $r$ whenever $0 \le r \le x$, statement $(vii)$ is evident. Since every band is a projection band in $E$, statement $(viii)$ follows. Statement $(ix)$ holds, because the sum of two bands is a band when $E$ has the projection property by \cite[Theorem~30.1(ii)]{zaanen1}, and the fact that the bands form a distributive lattice by \cite[Theorem~22.6]{zaanen1}.   

\begin{proposition} \label{p:properties_bands}
Let $E$ be a real (or complex) vector lattice, put $x, y \in E$, suppose $r$ and $s$ are weak order units of $E$, and let $B$ be a band with projection $\bP$. The following hold.
\begin{itemize}
\item[$(i)$] If $E$ is a $\Phi$-algebra, then $\bP(xy) = \bP(x) \bP(y) = \bP(x) y = \bP(y) x$,
\item[$(ii)$] if $E$ is a  $\Phi$-algebra, then $B = \bP(e) E$,
\item[$(iii)$] $\bP(r)$ is a weak order unit of $B$,
\item[$(iv)$] if $\bP(r) = 0$, then $B = \{0\}$,
\item[$(v)$] if $\bP'$ is another band projection, then $\bP \bP' = \bP' \bP$, 
\item[$(vi)$] $r \wedge s$ is a weak order unit of $E$,
\item[$(vii)$] if $x \geq r$ then $x$ is a weak order unit of $E$,
\item[$(viii)$] if $E$ has the projection property, $x$ is a weak order unit of $B$, and $y$ a weak order unit of $B^d$, then $x + y$ is a weak order unit of $E$,
\item[$(ix)$] if $E$ has the projection property, then the collection of bands forms a distributive lattice where the supremum and infimum of two bands is given by their sum and intersection, respectively.
\end{itemize}
\end{proposition}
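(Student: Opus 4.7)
The plan is to treat this as a compendium of standard vector lattice facts, since each of the nine items is either a direct invocation of a theorem from the cited texts or a one-line consequence of such a theorem. My strategy is to split the items into two groups — (i)--(iv), which use the multiplicative $\Phi$-algebra structure, and (v)--(ix), which need only the vector-lattice and projection-property machinery — and handle each in order, supplying the short deductions where the statement is derived rather than directly cited.

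For the $\Phi$-algebra group, I would start by invoking \cite[Theorem~2.37]{aliprantis} to get the multiplicative identity $\bP(xy)=\bP(x)y$; commutativity of multiplication in Archimedean $\Phi$-algebras then yields $\bP(xy)=\bP(y)x$, and applying the identity once more with $\bP(x)$ in place of $x$ gives $\bP(x)\bP(y)=\bP(x)y$, which proves (i). For (ii), taking $y=e$ in the identity of (i) gives $\bP(x)=\bP(e)x$, so $B=\bP(E)=\bP(e)E$. Statement (iii) is the combined content of \cite[Theorem~2.44]{aliprantis} and \cite[Theorem~18.13]{babyzaanen}, and (iv) is immediate, since a band whose weak order unit is zero must be trivial.

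For the lattice-theoretic group, (v) is \cite[Theorem~1.45]{aliprantis} and (vi) is \cite[Theorem~32.5]{babyzaanen}. For (vii), I would observe that $0\le r\le x$ places $r$ in $B_x$, so $B_r\subseteq B_x$; since $r$ is a weak order unit we have $B_r=E$ and hence $B_x=E$. Statement (viii) follows similarly: positivity of $x$ and $y$ gives $x+y\ge x$ and $x+y\ge y$, so the band generated by $x+y$ contains both $B_x=B$ and $B_y=B^d$, hence all of $B\oplus B^d = E$. Finally, (ix) follows from \cite[Theorem~30.1(ii)]{zaanen1}, which gives that the sum of two bands is again a band under the projection property, together with \cite[Theorem~22.6]{zaanen1} for the distributivity of the lattice of bands. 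There is no real obstacle here; the only care required is to verify that each cited theorem is applied in the precise form needed and that the $\Phi$-algebra hypothesis is invoked only for items (i) and (ii).
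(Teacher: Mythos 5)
Your proposal is correct and follows essentially the same route as the paper: the same citations for items (i), (iii), (v), (vi), and (ix), with (ii) and (iv) deduced as immediate consequences of (i) and (iii) respectively, and the same band-containment observations for (vii) and (viii). The only difference is that you spell out a few of the one-line deductions (the chain of identities in (i), the substitution $y=e$ in (ii), and the inclusion $B_{x+y}\supseteq B\oplus B^d=E$ in (viii)) in slightly more detail than the paper does, which is harmless.
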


\Cref{p:zaanen_theorem_30_5_ii} is \cite[Theorem 30.5 (ii)]{zaanen1} for real vector lattices. We add that this result easily extends to the complex case as well.

\begin{notation}
Let $E$ be a real (or complex) vector lattice. The notation $B_\alpha \uparrow B$ means that $(B_\alpha)_\alpha$ is an increasing net with supremum $B$ in the distributive lattice of bands.
\end{notation}

\begin{proposition} \label{p:zaanen_theorem_30_5_ii}
Let $E$ be a real (or complex) vector lattice with sufficiently many projections. If $B_\alpha \uparrow B$ holds, then $\bP_\alpha (u) \uparrow \bP (u)$ for all $u \in E^+$.
\end{proposition}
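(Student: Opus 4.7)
The plan is to reduce the complex case to the real case, which is \cite[Theorem~30.5(ii)]{zaanen1}. Write $E = F + iF$ where $F$ is a real vector lattice with sufficiently many projections. Recall from the preliminaries that every band in $E$ has the form $C + iC$ for a unique band $C$ in $F$, and that the associated band projection is given by $\bP(x + iy) = \bP_C(x) + i\bP_C(y)$, where $\bP_C$ denotes the band projection in $F$ onto $C$. In particular, the restriction of $\bP$ to $F$ coincides with $\bP_C$, hence $\bP$ and $\bP_C$ agree on $E^+ = F^+$.

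First I would verify that the complexification map $C \mapsto C + iC$ is an order isomorphism between the lattice of bands in $F$ and the lattice of bands in $E$: it is order preserving by construction, and its inverse is $B \mapsto B \cap F$, which is also order preserving. Under such an order isomorphism, increasing nets with supremum correspond to increasing nets with supremum. Thus, writing $C_\alpha$ for the real band with $B_\alpha = C_\alpha + iC_\alpha$ and $C$ for the real band with $B = C + iC$, the hypothesis $B_\alpha \uparrow B$ in the band lattice of $E$ is equivalent to $C_\alpha \uparrow C$ in the band lattice of $F$.

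Now for any $u \in E^+ = F^+$, the real case applied to the net $C_\alpha \uparrow C$ in $F$ yields $\bP_{C_\alpha}(u) \uparrow \bP_C(u)$ in $F$. Since $\bP_\alpha|_F = \bP_{C_\alpha}$ and $\bP|_F = \bP_C$, and since suprema in $F^+$ agree with suprema in $E^+$ (as $F$ is a sublattice of $E$ with $E_\rho = F$), we conclude $\bP_\alpha(u) \uparrow \bP(u)$, as desired.

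The only mild obstacle is the bookkeeping in the first two steps: ensuring that the complexification correspondence is genuinely an order isomorphism of band lattices (so that suprema of increasing nets are transported correctly) and that the complex band projection literally restricts to the real band projection on $F$. Both follow directly from the definition of bands and band projections in the complexification $E = F + iF$ as recalled in the preliminaries; no further machinery is needed.
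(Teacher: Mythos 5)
Your proposal is correct and follows essentially the same route as the paper, which simply cites \cite[Theorem~30.5(ii)]{zaanen1} for the real case and remarks that the result ``easily extends to the complex case''; your write-up just makes that extension explicit via the band correspondence $C \mapsto C + iC$ and the restriction of the complex band projection to $F$. No gap here.
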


We introduce a band decomposition of $E$ that will allow us to use classical arguments that rely on the law of trichotomy for the real numbers. This aim motivates the following notation.

\begin{notation}
Let $E$ be a real (or complex) vector lattice with the projection property, and consider $x, y \in E_\rho$. We write the band generated by $(y - x)^+$ as $B_{x < y}$, and write $B_{x \leq y}$ for $B_{y < x}^d$. Finally, $B_{x = y}$ is defined to be the band $B_{x \leq y} \cap B_{y \leq x}$. The corresponding band projections will be written as $\bP_{x < y}$, $\bP_{x \leq y}$ and $\bP_{x=y}$, respectively.
\end{notation}

\begin{example}
Consider $E = \bR^{[0, 1]}$ and $x,y\in E$. Then the bands $B_{x < y}$, $B_{x = y}$, and $B_{x > y}$ correspond to the subsets of $[0, 1]$ where these inequalities hold pointwise. The following graphic illustrates this fact.

\begin{center}
\begin{tikzpicture}[scale = 5]

\draw[lightgray, step = 0.1] (0, 0) grid (1, 1);

\draw[red] (0, 0) -- (1/3, 1/2) node[midway, above left] {$\mathbf{x}$};
\draw[red] (1/3, 1/2) -- (2/3, 1/2);
\draw[red] (2/3, 1/2) -- (1, 1);

\draw[blue] (0, 1/2) -- (1, 1/2) node[midway, right = 50pt, below] {$\mathbf{y}$};

\fill[violet, opacity=0.25] (0,0) -- (1/3, 0) -- (1/3, 1) -- (0, 1) -- cycle;
\node[scale = 0.7] at (1/6, 0.85) {$B_{x < y}$};

\fill[green, opacity = 0.25] (1/3, 0) -- (2/3, 0) -- (2/3, 1) --(1/3, 1) -- cycle;
\node[scale = 0.7] at (1/2, 0.85) {$B_{x = y}$};
\draw[dashed, black] (1/3, 0) -- (1/3, 1);
\draw[dashed, black] (2/3, 0) -- (2/3, 1);

\fill[pink, opacity = 0.3] (2/3, 0) -- (1, 0) -- (1, 1) -- (2/3, 1) -- cycle;
\node[scale = 0.7] at (5/6, 0.85) {$B_{x > y}$};

\end{tikzpicture}
\end{center}

\end{example}

\begin{proposition} \label{p:bands_decomposition}
Let $E$ be a real (or complex) vector lattice with the projection property, and put $x, y \in E_\rho$. Then we get the following band decompositions of $E$:
\[ E = B_{x < y} \oplus B_{y \leq x} = B_{x < y} \oplus B_{y < x} \oplus B_{x = y}. \]
\end{proposition}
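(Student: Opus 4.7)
The proof is essentially an unfolding of definitions, supplemented by one classical lattice identity. Let me outline it.

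\textbf{First decomposition.} By the very definition of the notation (applied with the roles of $x$ and $y$ interchanged) we have $B_{y \le x} = B_{x < y}^d$. Since $E$ has the projection property, every band is a projection band, so
\[ E = B_{x<y} \oplus B_{x<y}^d = B_{x<y} \oplus B_{y\le x}. \]
This is the first identity.

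\textbf{Second decomposition.} It suffices to show that $B_{y \le x} = B_{y<x} \oplus B_{x=y}$. The key observation is the classical lattice identity $(y-x)^+ \wedge (x-y)^+ = 0$. This tells us that the generators of $B_{x<y}$ and $B_{y<x}$ are disjoint, hence the bands themselves are disjoint (the disjoint complement of a set and the band it generates coincide). Consequently
\[ B_{y<x} \subseteq B_{x<y}^d = B_{y\le x}. \]
Next I would invoke the general fact that in a vector lattice with the projection property, if $B_1 \subseteq B_2$ are bands, then $B_2 = B_1 \oplus (B_1^d \cap B_2)$; this is just the restriction of the band projection onto $B_1$ to the sublattice $B_2$ (which itself inherits the projection property). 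Applying this with $B_1 = B_{y<x}$ and $B_2 = B_{y \le x}$ yields
\[ B_{y\le x} = B_{y<x} \oplus \bigl(B_{y<x}^d \cap B_{y\le x}\bigr) = B_{y<x} \oplus \bigl(B_{x\le y} \cap B_{y\le x}\bigr) = B_{y<x} \oplus B_{x=y}, \]
where the middle equality is again just unwinding the definition $B_{x\le y} := B_{y<x}^d$. Combining with the first decomposition gives the second identity.

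\textbf{The complex case.} Since $x,y \in E_\rho$, the elements $(y-x)^+$ and $(x-y)^+$ lie in the real part, and each band $B_{x<y}, B_{y<x}, B_{x=y}$ is the complexification of the analogous band in $E_\rho$; band projections and direct-sum decompositions pass through the complexification. Thus the proof reduces verbatim to the real case already handled.

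The only potential obstacle is the appeal to the fact $B_2 = B_1 \oplus (B_1^d \cap B_2)$ for $B_1 \subseteq B_2$, but this is immediate from the projection property of $E$ (so of $B_2$) once one notes that the band projection onto $B_1$, restricted to $B_2$, has kernel exactly $B_1^d \cap B_2$. Everything else is definition-chasing plus the disjointness $(y-x)^+ \wedge (x-y)^+ = 0$.
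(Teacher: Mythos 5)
Your proof is correct. It follows the same overall skeleton as the paper's argument: the first decomposition is immediate from $B_{y\le x}=B_{x<y}^d$ and the projection property, and the second reduces to splitting one of the ``$\le$'' bands into the corresponding strict band plus $B_{x=y}$, with the disjointness $(y-x)^+\wedge(x-y)^+=0$ supplying the needed inclusion of the strict band into the complementary ``$\le$'' band. Where you diverge is in the mechanism for that splitting: the paper proves $B_{x\le y}=B_{x<y}\oplus B_{x=y}$ by computing inside the distributive lattice of bands (its Proposition on band properties, part $(ix)$), expanding $B_{x<y}+(B_{x\le y}\cap B_{y\le x})$ via distributivity; you instead prove the symmetric identity $B_{y\le x}=B_{y<x}\oplus(B_{y<x}^d\cap B_{y\le x})$ by restricting the band projection onto $B_{y<x}$ to the larger band $B_{y\le x}$. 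Your route avoids invoking distributivity of the band lattice altogether and rests only on the elementary fact that for nested projection bands $B_1\subseteq B_2$ one has $B_2=B_1\oplus(B_1^d\cap B_2)$, which you justify adequately; the paper's route is a one-line lattice computation once distributivity is granted. Both are sound, and your explicit remark that the complex case reduces to the real part (since $x,y\in E_\rho$ and bands are complexifications) is a point the paper leaves implicit.
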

\begin{proof}
Since $B_{y \leq x} = B_{x < y}^d$, we know that $E = B_{x < y} \oplus B_{y \leq x}$. For the second decomposition, we show that $B_{x \leq y} = B_{x < y} \oplus B_{x = y}$. To this end, we know that $B_{x = y} \subseteq B_{y \leq x} = B_{x < y}^d$, so $B_{x < y} \cap B_{x = y} = \{0\}$. Since $(y - x)^+ \perp (x - y)^+$, $B_{x < y} = B_{(y-x)^+} \subseteq B_{(x-y)^+}^d = B_{x \leq y}$. Putting the last two facts together gives us
\begin{align*}
B_{x < y}
 \oplus B_{x = y} & = B_{x < y} + (B_{x \leq y} \cap B_{y \leq x}) \\
& = (B_{x < y} + B_{x \leq y}) \cap (B_{x < y} + B_{y \leq x}) \\
& = B_{x \leq y} \cap E  \\ &= B_{x \leq y}. \hfill \qedhere
\end{align*}
\end{proof}

\begin{lemma}
Let $E$ be a real (or complex) vector lattice with the projection property, and assume that $x, y \in E_\rho$. Then
\[ \bP_{x < y} (y - x) = \bP_{x \leq y} (y - x) = (y - x)^+ \text{ and } \bP_{x = y} (y - x) = 0. \] 
\end{lemma}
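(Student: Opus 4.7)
The plan is to reduce everything to the standard decomposition $y - x = (y-x)^+ - (y-x)^-$ together with the disjointness $(y-x)^+ \perp (y-x)^-$. Setting $w := y-x$, $w^+ := (y-x)^+$ and $w^- := (y-x)^- = (x-y)^+$, I will compute the three projections of $w$ by separately computing the projections of $w^+$ and $w^-$ and using linearity of band projections.

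For the first identity, I would use that $B_{x<y}$ is by definition $B_{w^+}$, so $\bP_{x<y}(w^+) = w^+$. Since $w^+ \perp w^-$, the element $w^-$ lies in $B_{w^+}^d$, whence $\bP_{x<y}(w^-) = 0$. Linearity gives $\bP_{x<y}(w) = w^+$.

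For the second identity, observe that $B_{x \leq y} = B_{y < x}^d = B_{w^-}^d$, so the associated projection equals $I - \bP_{w^-}$. Since $\bP_{w^-}(w^+) = 0$ (again by $w^+ \perp w^-$) and $\bP_{w^-}(w^-) = w^-$, a direct calculation yields
\[
\bP_{x \leq y}(w) = w - \bP_{w^-}(w) = (w^+ - w^-) - (0 - w^-) = w^+.
\]
For the third identity, I would unfold the definition $B_{x = y} = B_{x \leq y} \cap B_{y \leq x} = B_{w^-}^d \cap B_{w^+}^d$. This shows that $B_{x=y}$ is disjoint from both $B_{w^+}$ and $B_{w^-}$, so $\bP_{x=y}(w^+) = 0 = \bP_{x=y}(w^-)$, and hence $\bP_{x=y}(w) = 0$ by linearity.

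The only bit of bookkeeping that requires care — rather than genuine difficulty — is remembering that $B_{x \leq y}$ is not the band generated by a positive element but the disjoint complement of one, so its projection acts as $I - \bP_{w^-}$ rather than as $\bP_{w^-}$ itself. Once this is noted, all three claims collapse onto the two elementary facts $\bP_B(z) = z$ for $z \in B$ and $\bP_B(z) = 0$ for $z \in B^d$, applied to $w^+$ and $w^-$.
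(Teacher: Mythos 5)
Your proof is correct and follows essentially the same route as the paper's: both arguments rest on the decomposition $y-x=(y-x)^+-(y-x)^-$ together with the disjointness $(y-x)^+\perp(y-x)^-$ and the definitions of the three bands. The only cosmetic difference is in the last identity, where the paper deduces $\bP_{x=y}(y-x)=0$ from positivity of $\bP_{x=y}$ applied to both $y-x$ and $x-y$, whereas you annihilate $(y-x)^+$ and $(y-x)^-$ directly; these are one-line variants of the same observation.
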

\begin{proof}
Since we have that $(y - x)^+ \in B_{x < y}$ and $(y - x)^- \in B_{y \leq x}$, it follows that $\bP_{x < y} (y - x) = (y - x)^+$ and $\bP_{y \leq x} (y - x) = -(y-x)^-$. By interchanging $x$ and $y$, we obtain $\bP_{x \leq y} (y - x) = (x - y)^- = (y - x)^+$. Now,
\[ \bP_{x = y}(y - x) = \bP_{y \leq x} \bP_{x \leq y} (y - x) = \bP_{y \leq x} (y - x)^+ \geq 0. \]
Similarly, $\bP_{x = y} (x - y) \geq 0$. Hence, $\bP_{x = y}(x - y) = 0$. 
\end{proof}

\begin{proposition} \label{p:bands_largest_characterization}
Let $E$ be a real (or complex) vector lattice with the projection property, and consider $x, y \in E_\rho$. Then
\begin{itemize}
\item[$(i)$] $B_{x \leq y}$ is the largest band for which the corresponding band projection $\bP$ satisfies $\bP (x) \leq \bP (y)$,
\item[$(ii)$] $B_{x = y}$ is the largest band for which the corresponding band projection $\bP$ satisfies $\bP (x) = \bP (y)$, and
\item[$(iii)$] $B_{x < y}$ is the largest band for which $\bP (y - x)$ is a weak order unit.
\end{itemize}
\end{proposition}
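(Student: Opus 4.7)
For each of the three items I need to verify two things: (a) that the named band satisfies the stated property, and (b) that every other band satisfying the property is contained in it. The preceding lemma provides the explicit values $\bP_{x \leq y}(y-x) = \bP_{x<y}(y-x) = (y-x)^+$ and $\bP_{x=y}(y-x) = 0$. Together with the observation that $B_{x<y}$ is by definition the band generated by $(y-x)^+$ (so $(y-x)^+$ is a weak order unit of $B_{x<y}$), this takes care of direction (a) for all three items. The work lies entirely in the maximality direction.

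\textbf{Maximality for \emph{(i)} and \emph{(ii)}.} The common engine is the decomposition
\[ \bP(y-x) = \bP\bigl((y-x)^+\bigr) - \bP\bigl((y-x)^-\bigr), \]
where the two summands are positive and disjoint because band projections preserve order and disjointness. For \emph{(i)}, the hypothesis $\bP(x) \leq \bP(y)$ means $\bP((y-x)^-) \leq \bP((y-x)^+)$; combined with disjointness this forces $\bP((y-x)^-) = 0$, so $(y-x)^- \in B^d$, hence $B_{y<x} = B_{(y-x)^-} \subseteq B^d$, and taking disjoint complements yields $B \subseteq B_{y<x}^d = B_{x \leq y}$. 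For \emph{(ii)}, the hypothesis $\bP(y - x) = 0$ applied to the same decomposition forces \emph{both} summands to vanish (again by the disjointness-plus-positivity trick), so $B_{x<y}$ and $B_{y<x}$ both sit inside $B^d$, and taking complements gives $B \subseteq B_{x \leq y} \cap B_{y \leq x} = B_{x = y}$.

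\textbf{Maximality for \emph{(iii)} and the main obstacle.} Suppose $\bP(y-x)$ is a weak order unit of the band $B$. Since $\bP(y-x) \geq 0$, the argument from \emph{(i)} reduces us to $\bP(y-x) = \bP((y-x)^+)$. As $(y-x)^+ \in B_{x<y}$ and band projections commute by \Cref{p:properties_bands}$(v)$, we have $\bP((y-x)^+) = \bP_{x<y}\bP((y-x)^+) \in B_{x<y}$, so the band generated in $E$ by $\bP(y-x)$ is contained in $B_{x<y}$. The subtle point, and the main obstacle, is to identify this band with $B$ itself: for an element $u$ of a band $B$ in $E$, the band generated by $u$ in $E$ coincides with the band generated by $u$ within $B$, because $B$ is an order-closed ideal and so the ideal of $u$ and its order-closure behave identically inside $E$ and inside $B$. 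Since $\bP(y-x)$ is a weak order unit of $B$, this common band equals $B$, and therefore $B \subseteq B_{x<y}$.
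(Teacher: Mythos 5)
Your proof is correct and follows essentially the same route as the paper's: you kill $\bP((y-x)^-)$ using positivity of the projection (the paper uses the sign of $\bP((x-y)^+)$ rather than disjointness, but the mechanism is identical), deduce (ii) from (i), and for (iii) show $\bP(y-x)\in B_{x<y}$ and conclude via the band it generates. The only difference is that you make explicit the step the paper leaves implicit in (iii) — that the band generated in $E$ by a weak order unit of $B$ is $B$ itself — and your justification of it is sound.
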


\begin{proof}
It is immediate from the previous lemma that the above bands have the claimed properties. It remains to be shown that they are the largest such bands. To this end, let $B$ be a band with band projection $\bP$.

(i) Suppose $\bP (x) \leq \bP (y)$. Then $\bP ( (x - y)^+) = \bP(x - y) - \bP( (x - y)^-) \leq 0$, but we must have that $\bP ( (x - y)^+) \geq 0$. Hence, $\bP( (x - y)^+ ) = 0$. Since $(x - y)^+$ is a weak order unit of $B_{y < x}$, this means $B \perp B_{y < x}$. We conclude that  $B \subseteq B_{x \leq y}$.

(ii) Suppose $\bP(x) = \bP(y)$. By (i), we have that $B \subseteq B_{x \leq y} \cap B_{y \leq x} = B_{x = y}$.

(iii) Suppose $\bP(y - x)$ is a weak order unit of $B$. Then
\[ 0 \leq \bP(y - x) = \bP(y - x) \vee 0 = \bP( (y-x) \vee 0) = \bP( (y-x)^+) \leq (y-x)^+.\]
Thus, $\bP(y - x) \in B_{x < y}$ and hence $B \subseteq B_{x < y}$.
\end{proof}

\subsection{An $\eps$-$\delta$ reformulation of order continuity}

The notation $f \colon \dom(f) \to E$ refers to a function whose domain $\dom(f)$ is a subset of $E$. We define what it means for such functions to be order continuous in terms of order convergence, and reformulate the definition to resemble the classical $\eps$-$\delta$ definition.

\begin{notation}
Given a real (or complex) vector lattice $E$, the notation $A \searrow 0$ means $A \subseteq E_\rho$ and $\inf A = 0$. If additionally $A$ is downwards directed, we write $A \downarrow 0$.

\end{notation}

\begin{definition}\label{d:epsilon delta}
Let $E$ be a real (or complex) vector lattice. A net $(x_\alpha)_\alpha$ in $E$ is said to \emph{converge in order} to $x \in E$, written $x_\alpha \to x$, if there exists an $\Eps \searrow 0$ such that for every $\eps \in \Eps$ there is an $\alpha_0$ satisfying
\[ \alpha \geq \alpha_0 \implies \abs{x_\alpha - x} \leq \eps. \]
\end{definition}

This convergence is the only one on $E$ that will be considered in this paper.

\begin{remark}\label{r:eps directed}
    In \Cref{d:epsilon delta}, the set $\Eps$ can be replaced by the set of finite infima of its elements, making it downwards directed, but without changing the infimum.
\end{remark}

\begin{definition}
Let $E$ be a real (or complex) vector lattice. A function $f\colon \dom(f) \to E$ is called \emph{order continuous at $c \in \dom(f)$} if whenever $x_\alpha \to c$ in $\dom(f)$, we have $f(x_\alpha) \to f(c)$. The function $f$ is called \emph{order continuous} if it is order continuous at every point in $\dom(f)$.

\end{definition}

In order to make the similarity with classical theory in this paper explicit, the above definition is reformulated to resemble the classical $\eps$-$\delta$ definition.



\begin{proposition} \label{p:eps_delta_continuity}
Let $E$ be a real (or complex) vector lattice. A function $f \colon \dom(f) \to E$ is order continuous at $c \in \dom(f)$ if and only if for every $\Delta \downarrow 0$ there exists an $\Eps \searrow 0$ with the property that for all $\eps \in \Eps$ there exists a $\delta \in \Delta$ satisfying
\[ x \in \dom(f) \text{ and } \abs{x - c} \leq \delta \implies \abs{f(x) - f(c)} \leq \eps. \]
\end{proposition}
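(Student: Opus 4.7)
The plan is to prove the two implications separately, with the (mild) obstacle lying in the forward direction.

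For $(\Leftarrow)$, I would assume the $\eps$-$\delta$ condition and take an order convergent net $x_\alpha \to c$ in $\dom(f)$, witnessed by some $\Eps' \searrow 0$. Using \Cref{r:eps directed}, I would replace $\Eps'$ by its set of finite infima to arrange $\Eps' \downarrow 0$ without changing the infimum, and feed this $\Eps'$ as the input $\Delta$ to the hypothesis. The resulting $\Eps \searrow 0$ should witness $f(x_\alpha) \to f(c)$: given $\eps \in \Eps$, the hypothesis yields a $\delta \in \Eps'$ controlling $|f(x) - f(c)|$ on $\{x \in \dom(f) \colon |x - c| \le \delta\}$, and the convergence of $(x_\alpha)$ supplies an $\alpha_0$ past which $|x_\alpha - c| \le \delta$; chaining these gives $|f(x_\alpha) - f(c)| \le \eps$ eventually.

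For $(\Rightarrow)$, I would fix $\Delta \downarrow 0$ and build a single net whose order convergence to $c$ captures \emph{all} admissible $x$ rather than just one per $\delta$. The natural index set is
\[ I := \{(x, \delta) \colon \delta \in \Delta, \ x \in \dom(f), \ |x - c| \le \delta\}, \]
preordered by $(x, \delta) \preceq (x', \delta')$ iff $\delta' \le \delta$; down-directedness of $\Delta$ makes $I$ directed, with $(c, \delta_3)$ serving as an upper bound of any two elements (choosing $\delta_3 \in \Delta$ below both their second coordinates). Then the net $(x, \delta) \mapsto x$ lies in $\dom(f)$ and converges to $c$ with witness $\Delta$ itself, since past the threshold $(c, \delta_0)$ one has $|x - c| \le \delta \le \delta_0$. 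Order continuity at $c$ then yields some $\Eps \searrow 0$ with $f(x_\alpha) \to f(c)$; for each $\eps \in \Eps$ I would pick a threshold $(x_0, \delta_0)$. Any $x \in \dom(f)$ with $|x - c| \le \delta_0$ then satisfies $(x, \delta_0) \succeq (x_0, \delta_0)$ in $I$, giving $|f(x) - f(c)| \le \eps$, so $\delta := \delta_0 \in \Delta$ completes the $\eps$-$\delta$ statement.

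The main obstacle is precisely what motivates the enlarged index: a naive net indexed by $\Delta$ alone, via a choice function $\delta \mapsto x_\delta$ with $|x_\delta - c| \le \delta$, would still be an order convergent net, but order continuity would then only control $f$ at the chosen $x_\delta$. Enlarging the index to all admissible pairs $(x, \delta)$ forces the eventual condition on the output $\Eps$ past level $\delta_0$ to sweep over the whole ball $\{x \in \dom(f) \colon |x - c| \le \delta_0\}$, which is exactly what the $\eps$-$\delta$ formulation demands.
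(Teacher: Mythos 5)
Your proposal is correct and follows essentially the same route as the paper: the backward direction uses \Cref{r:eps directed} to turn the witness of $x_\alpha \to c$ into a downwards directed $\Delta$ and then chains the two quantifier strings, and the forward direction builds the net indexed by all admissible pairs $(x,\delta)$ ordered by reverse comparison of the $\delta$-coordinates, exactly as in the paper's proof. Your closing remark about why the enlarged index set is needed is a correct articulation of the idea the paper uses implicitly.
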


\begin{proof}
Suppose $f$ is order continuous at $c$. Let $\Delta \downarrow 0$. The set 
\[ A \defeq \{(x, \delta) \colon x \in \dom(f), \delta \in \Delta, \text{ and } \abs{x - c} \leq \delta\} \]
is directed by $(x_1, \delta_1) \leq (x_2, \delta_2)$ if and only if $\delta_2 \leq \delta_1$. Define, for any $\alpha = (x, \delta)$ in $A$, $x_\alpha \defeq x$ so that $x_\alpha \to c$ in $\dom(f)$. By order continuity, $f(x_\alpha) \to f(c)$, so there exists an $\Eps \searrow 0$ with the property that for every $\eps \in \Eps$ there exists an $\alpha_0 = (x_0, \delta_0)$ such that for $\alpha \geq \alpha_0$, we have $\abs{f(x_\alpha) - f(c)} \leq \eps$. If $x \in \dom(f)$ with $\abs{x - c} \leq \delta_0$, then $(x, \delta_0) \geq (x_0, \delta_0) = \alpha_0$, and so we have $\abs{f(x) - f(c)} \leq \eps$.

Conversely, suppose $f$ satisfies the condition in the proposition. Suppose that $x_\alpha \to c$ in $\dom(f)$. Then there exists a $\Delta \downarrow 0$ such that for all $\delta \in \Delta$ there exists an $\alpha_0$ for which $\abs{x_\alpha - c} \leq \delta$ for all $\alpha \geq \alpha_0$. For this $\Delta$, there exists an $\Eps \searrow 0$ that satisfies our assumption. Let $\eps \in \Eps$ with corresponding $\delta$ and $\alpha_0$. Then, for all $\alpha \geq \alpha_0$, we have $\abs{x_\alpha - c} \leq \delta$ so $\abs{f(x_\alpha) - f(c)} \leq \eps$. Therefore, $f(x_\alpha) \to f(c)$.
\end{proof}


In this characterisation, it is immaterial whether $\Eps$ is required to be directed or not (as explained in \Cref{r:eps directed}). However, it is essential that we only consider downwards directed $\Delta$. The following example shows how our reformulation of order continuity fails for $\Delta \searrow 0$.

\begin{example}

Consider $f \colon \bR^2 \to \bR^2$ defined by $f(x, y) := (x + y, 0)$. It is clear that $f$ is order continuous at $(0, 0)$. Consider $\Delta \defeq \{(1, 0), (0, 1)\} \searrow 0$. Since $f(0, 0) = (0, 0)$, $f(1, 0) = (1, 0)$, and $f(0, 1) = (1, 0)$, there can be no $\Eps \searrow 0$ that satisfies our characterisation of order continuity for this set $\Delta$, since $\Eps$ would have to be bounded below by $(1, 0)$.
\end{example}

\section{Order differentiability}

We define what it means for a function to be differentiable on subsets of $E$, as was first done in \cite{differentiation}, for complex $\Phi$-algebras. Note that this definition can also be used for real $\Phi$-algebras. 

\begin{definition} \label{d:ll}
Let $E$ be a Dedekind complete real (or complex) $\Phi$-algebra, and suppose that $x, y \in E_\rho$. We write $x \ll y$ (or $y \gg x$) to mean $y-x$ is a weak order unit of $E$. In particular, $r \gg 0$ means that $r$ is a weak order unit.
\end{definition}

\begin{definition}
Let $E$ be a Dedekind complete real (or complex) $\Phi$-algebra. For $c \in E$ and $r \gg 0$, define
\[ N(c, r) \defeq \{z \in E \colon \abs{z - c} \ll r \}, \quad \text{and} \quad \barN(c, r)  \defeq \{y \in E \colon \abs{z - c} \leq r \}. \]
If $E$ is real and $a, b \in E$, we define
\begin{align*}
(a, b) & \defeq \{x \in E \colon a \ll x \ll b\} \quad \text{ and}\quad [a, b] \defeq \{x \in E \colon a \leq x \leq b \}.
\end{align*}
In this case, $N(c, r) = (c - r, c + r)$ and $\barN(c, r) = [c - r, c + r]$.
\end{definition}

There are two natural notions of differentiability introduced in \cite[Definitions 3.5 and 4.1]{differentiation}. Using similar arguments as in the proof of \Cref{p:eps_delta_continuity}, these can be given equivalent $\epsilon$-$\delta$ definitions.

\begin{definition} \label{d:diff}
Let $E$ be a Dedekind complete real (or complex) $\Phi$-algebra. A function $f \colon \dom(f) \to E$ is called \emph{order differentiable} at $c \in \dom(f)$ with derivative $f'(c)$ if there exists an $r \gg 0$ such that $N(c, r) \subseteq \dom(f)$, and, for every $\Delta \downarrow 0$, there exists an $\Eps \searrow 0$ with the property that for all $\eps \in \Eps$ there exists a $\delta \in \Delta$ satisfying
\[ z \in N(c, r) \text{ and } \abs{z - c} \leq \delta \implies \abs{f(z) - f(c) - (z - c)f'(c)} \leq \abs{z - c} \eps. \]
\end{definition}

\begin{definition}
Let $E$ be a Dedekind complete real (or complex) $\Phi$-algebra. A function $f \colon \dom(f) \to E$ is called \emph{super order differentiable} at $c \in \dom(f)$ with derivative $f'(c)$ if there exists an $r \gg 0$ such that $N(c, r) \subseteq \dom(f)$, and, for every $\Delta \downarrow 0$, there exists an $\Eps \searrow 0$ with the property that for all $\eps \in \Eps$ there exists a $\delta \in \Delta$ satisfying
\[ z \in \dom(f) \text{ and } \abs{z - c} \leq \delta \implies \abs{f(z) - f(c) - (z - c)f'(c)} \leq \abs{z - c} \eps. \]
\end{definition}

The above definitions differ in whether the implication is required to hold only for $z \in N(c, r)$ or for all $z \in \dom(f)$. These definitions are not equivalent because, for a given $r \gg 0$, it is possible to have a net $(z_\alpha)_\alpha$ in $\dom(f)$ converging to $c$ that never enters $N(c, r)$. Therefore, to determine whether a function is order differentiable at $c$, one can ignore how $f$ acts on certain nets converging to $c$. However, all nets converging to $c$ must be considered to determine super order differentiability.

\begin{remark}
In \cite[Notation 3.2]{differentiation}, $x \ll y$ is defined to mean that $y - x$ is a positive invertible element. Every positive invertible element is a weak order unit \cite[Theorem~142.2 (ii)]{zaanen2}, and the converse holds when $E$ is universally complete but not in general \cite[Remark~3.3]{series}. For example, $(\frac{1}{n})_{n \in \bN}$ is a weak order unit in $\ell^\infty(\bN)$ but is not invertible. Changing this definition is necessary for the techniques used in Section $5$, where it is needed that $\bP_{x < y}(x) \ll \bP_{x < y}(y)$, when seen as elements of $B_{x < y}$.

The results in \cite{differentiation} remain true when the positive invertible element $r$ is replaced by a weak order unit, except for the argument showing that the derivative is unique in \cite[Remark 3.6]{differentiation}. A modified proof is given in \Cref{p:derivative_unique}.
\end{remark}

For $m \in \bN$ and a weak order unit $r$, we have that $\bP_{\frac{1}{m}e \leq r}(r)$ must be invertible in $B_{\frac{1}{m}e\leq r}$ by \cite[Theorem~1.11]{depagter}, as it is larger than the invertible element $\bP_{\frac{1}{m} e \leq r}(\frac{1}{m}e)$. The following lemma therefore allows us to still use invertibility arguments for weak order units.

\begin{lemma} \label{l:projection_weak_unit_invertible}
Let $E$ be a Dedekind complete real (or complex) $\Phi$-algebra, and assume that $r$ is a weak order unit of $E$. Then $B_{\frac{1}{m}e \leq r} \uparrow E$ as $m \to \infty$ and hence $\bP_{\frac{1}{m}e \leq r} (u) \uparrow u$ as $m \to \infty$ for all $u \in E^+$.
\end{lemma}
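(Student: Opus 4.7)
The plan is to proceed in three steps: first, establish that $(B_{\frac{1}{m}e \leq r})_{m \in \bN}$ is an increasing sequence of bands; second, show that its supremum in the band lattice is $E$; and third, invoke \Cref{p:zaanen_theorem_30_5_ii} to obtain the assertion about projections of positive elements.

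Monotonicity is straightforward from \Cref{p:bands_largest_characterization}(i). For $m \leq m'$ we have $\frac{1}{m'}e \leq \frac{1}{m}e$, so any band projection $\bP$ satisfying $\bP(\frac{1}{m}e) \leq \bP(r)$ automatically satisfies $\bP(\frac{1}{m'}e) \leq \bP(r)$; the maximality statement in \Cref{p:bands_largest_characterization}(i) then yields $B_{\frac{1}{m}e \leq r} \subseteq B_{\frac{1}{m'}e \leq r}$.

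The core of the argument is to show that $B \defeq \bigvee_m B_{\frac{1}{m}e \leq r}$ equals $E$. Because $E$ has the projection property, it suffices to verify $B^d = \{0\}$. Unwinding the definition $B_{\frac{1}{m}e \leq r} = B_{r < \frac{1}{m}e}^d$ gives $B^d \subseteq B_{\frac{1}{m}e \leq r}^d = B_{r < \frac{1}{m}e}$, and the latter is the band generated by $(\frac{1}{m}e - r)^+$. Take an arbitrary $z \in (B^d)^+$. Since $(\frac{1}{m}e - r)^+$ and $(r - \frac{1}{m}e)^+$ are disjoint, the band generated by the former lies in $\{(r - \frac{1}{m}e)^+\}^d$, and hence $z \wedge (r - \frac{1}{m}e)^+ = 0$ for every $m$. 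Because Dedekind completeness implies that $E$ is Archimedean, $\frac{1}{m}e \downarrow 0$, and therefore
\[ (r - \tfrac{1}{m}e)^+ = r - r \wedge \tfrac{1}{m}e \uparrow r. \]
By the infinite distributive law for $\wedge$ over existing suprema, $z \wedge r = \sup_m z \wedge (r - \frac{1}{m}e)^+ = 0$. Since $r$ is a weak order unit, this forces $z = 0$, so $B^d = \{0\}$ and $B = E$.

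With $B_{\frac{1}{m}e \leq r} \uparrow E$ established, the second conclusion $\bP_{\frac{1}{m}e \leq r}(u) \uparrow u$ for all $u \in E^+$ is then immediate from \Cref{p:zaanen_theorem_30_5_ii}, noting that the band projection associated to all of $E$ is the identity on $E$. The main subtlety of the argument is the passage from $z \wedge (r - \frac{1}{m}e)^+ = 0$ for every $m$ to $z \wedge r = 0$: this step depends crucially on both the Archimedean property (to guarantee $\frac{1}{m}e \downarrow 0$) and on the distributivity of $\wedge$ over existing suprema, both of which hold in a Dedekind complete vector lattice.
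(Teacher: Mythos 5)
Your proof is correct and follows essentially the same route as the paper: both arguments reduce to showing that the disjoint complement of the supremum (respectively, of an arbitrary upper bound $B_0$) of the bands $B_{\frac{1}{m}e \leq r}$ is trivial, using the Archimedean property together with the fact that $r$ is a weak order unit, and then both conclude via \Cref{p:zaanen_theorem_30_5_ii}. The only difference is cosmetic: the paper deduces $\bP_0^d(r) \leq \bP_0^d(\frac{1}{m}e)$ and lets $m \to \infty$, whereas you deduce $z \wedge (r - \frac{1}{m}e)^+ = 0$ and use $(r - \frac{1}{m}e)^+ \uparrow r$.
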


\begin{proof}
Let $B_0$ be a band such that $B_{\frac{1}{m}e \leq r} \subseteq B_0$ for all $m \in \bN$. Then, for any $m \in \bN$, $B_0^d \subseteq B_{\frac{1}{m}e > r}$ and so $\bP_0^d (r) \leq \bP_0^d (\frac{1}{m}e)$. Therefore, $\bP_0^d (r) = \{ 0\}$, so $B_0^d = \{0\}$ and hence $B_0 = E$. Now \Cref{p:zaanen_theorem_30_5_ii} completes the proof.
\end{proof}

\begin{proposition} \label{p:derivative_unique}
Let $E$ be a Dedekind complete real (or complex) $\Phi$-algebra. If $f \colon \dom(f) \to E$ is order differentiable at $c \in \dom(f)$, there is a unique value of $f'(c)$ that satisfies \Cref{d:diff}.
\end{proposition}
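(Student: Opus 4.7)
The plan is to suppose that two elements $h_1, h_2 \in E$ both satisfy the condition in \Cref{d:diff} for $f$ at $c$, and to show $h := h_1 - h_2 = 0$. Let $r_1, r_2 \gg 0$ be the radii witnessing the two estimates. Their infimum $r := r_1 \wedge r_2$ remains a weak order unit by \Cref{p:properties_bands}$(vi)$, and $N(c, r) \subseteq N(c, r_1) \cap N(c, r_2)$, so both defining implications apply on $N(c, r)$. For any fixed $\Delta \downarrow 0$, I would extract $\Eps_1, \Eps_2 \searrow 0$ from \Cref{d:diff}, form $\Eps := \Eps_1 + \Eps_2$ (which again satisfies $\Eps \searrow 0$), and use the downward directedness of $\Delta$ together with the triangle inequality to combine the two estimates into
\[ z \in N(c, r), \ |z - c| \leq \delta \implies |(z-c) h| \leq |z-c| \eps \]
for each $\eps \in \Eps$ and a suitable associated $\delta \in \Delta$.

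Next I would specialise $\Delta := \{\tfrac{1}{n}r : n \in \bN\}$, which is downward directed to $0$ by the Archimedean property. For each $\eps \in \Eps$ with accompanying $\delta = \tfrac{1}{n_0}r$, I test the estimate at $z := c + \tfrac{1}{n_0 + 1} r$. This $z$ lies in $N(c, r)$ because $r - \tfrac{1}{n_0+1}r = \tfrac{n_0}{n_0+1}r \gg 0$, and $|z - c| = \tfrac{1}{n_0+1}r \leq \delta$, so the estimate reduces, after cancelling the positive scalar $\tfrac{1}{n_0+1}$, to $r|h| \leq r\eps$. After replacing $\Eps$ by its downward-directed hull of finite infima (\Cref{r:eps directed}) and using that multiplication by $r \geq 0$ preserves finite infima in an $f$-algebra, order continuity of this multiplication along the directed set yields $r\eps \downarrow 0$, forcing $r|h| = 0$.

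The main obstacle is deducing $|h| = 0$ from $r|h| = 0$ when $r$ is only a weak order unit, rather than a positive invertible element as in \cite{differentiation}. For this I would invoke \Cref{l:projection_weak_unit_invertible}: for each $m \in \bN$, writing $\bP_m$ for the band projection onto $B_{\frac{1}{m}e \leq r}$, the element $\bP_m(r)$ is invertible in $B_{\frac{1}{m}e \leq r}$ since it dominates the invertible element $\tfrac{1}{m}\bP_m(e)$. Projecting the identity $r|h| = 0$ and applying \Cref{p:properties_bands}$(i)$ gives $\bP_m(r)\bP_m(|h|) = 0$, and invertibility forces $\bP_m(|h|) = 0$. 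Finally, \Cref{l:projection_weak_unit_invertible} gives $\bP_m(|h|) \uparrow |h|$ as $m \to \infty$, so $|h| = 0$, and hence $h = 0$.
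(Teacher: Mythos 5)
Your argument is correct and follows essentially the same route as the paper's: both specialise to $\Delta = \{\tfrac{1}{n}r : n \in \bN\}$, test at points $c + \tfrac{1}{n+1}r$, exploit the invertibility of $\bP_{\frac{1}{m}e \leq r}(r)$ in $B_{\frac{1}{m}e \leq r}$, and pass from these bands to all of $E$ via \Cref{l:projection_weak_unit_invertible}. The only organisational difference is that you subtract two candidate derivatives and drive $r\abs{h_1 - h_2}$ to zero directly (also tidily handling two possibly different radii via $r_1 \wedge r_2$), whereas the paper identifies $\bP_{\frac{1}{m}e \leq r}(f'(c))$ as the unique order limit of explicit difference quotients.
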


\begin{proof}
Let $r$ be as in \Cref{d:diff}. For $\Delta \defeq \{\frac{1}{n} r\colon n \in \bN \}$, let $\Eps$ be as in \Cref{d:diff}. Fix $m \in \bN$. Since $\frac{1}{m}e$ is invertible, $\bP_{\frac{1}{m}e \leq r}(r)$ is invertible in $B_{\frac{1}{m}e \leq r}$. Let $s$ denote its inverse.

For a given $\eps \in \Eps$ with corresponding $\delta = \frac{1}{M} r$, we have that for all $n \geq M$, $z_n \defeq c + \frac{1}{n + 1} r$ satisfies $z_n \in N(c, r)$ and $\abs{z_n - c} \leq \delta$, so that
\[ \abs*{f(z_n) - f(c) - {\textstyle \frac{1}{n + 1}} r f'(c)} \leq {\textstyle \frac{1}{n + 1}} r \eps, \]
and hence
\[ \abs*{(n+1) s \bP_{\frac{1}{m}e \leq r} (f(z_n) - f(c)) - \bP_{\frac{1}{m}e \leq r} (f'(c))} \leq \bP_{\frac{1}{m}e \leq r} (\eps). \]
Therefore, $(n+1) s \bP_{\frac{1}{m}e \leq r} (f(z_n) - f(c)) \to \bP_{\frac{1}{m}e \leq r} (f'(c))$. Hence the derivative is unique in $B_{\frac{1}{m}e \leq r}$  for all $m \in \bN$. By \Cref{l:projection_weak_unit_invertible}, it is uniquely determined in $E$.
\end{proof}

We recall the sum and product rules for order differentiable and super order differentiable functions from \cite[Theorems 3.15 and 4.3]{differentiation}, where the reader may also find chain and quotient rules.

\begin{proposition} \label{p:properties_derivative}
Let $E$ be a Dedekind complete real (or complex) $\Phi$-algebra, and assume that $f\colon \dom(f) \to E$ and $g\colon \dom(f) \to E$, and let $f$ and $g$ be (super) order differentiable at $c \in \dom(f) \cap \dom(g)$. Then,
\begin{itemize}
    \item[$(i)$] $f + g$ is (super) order differentiable at $c$ with $(f+g)'(c) = f'(c) + g'(c)$, and
    \item[$(ii)$] $fg$ is (super) order differentiable at $c$ with $(fg)'(c) = f'(c) g(c) + f(c) g'(c)$.
\end{itemize}
\end{proposition}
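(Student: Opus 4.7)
The strategy is to follow the classical proofs of the sum and product rules, with careful bookkeeping of the vector-lattice-valued parameter sets $\Eps \searrow 0$ and $\Delta \downarrow 0$. Let $r_f, r_g \gg 0$ be the weak order units witnessing the (super) order differentiability of $f$ and $g$ at $c$, and set $r := r_f \wedge r_g$, which is itself a weak order unit by \Cref{p:properties_bands}$(vi)$. This $r$ will serve as the neighbourhood radius for both $f + g$ and $fg$.

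For $(i)$, given $\Delta \downarrow 0$, I would apply the (super) order differentiability of $f$ and of $g$ to this same $\Delta$ to obtain $\Eps_f, \Eps_g \searrow 0$; by \Cref{r:eps directed} I may assume both are downward directed with infimum $0$. Define $\Eps := \{\eps_f + \eps_g : \eps_f \in \Eps_f,\ \eps_g \in \Eps_g\}$, which is downward directed (combine the directed structures termwise) and has $\inf \Eps = 0$ (if $w$ is a lower bound, fixing $\eps_g \in \Eps_g$ shows $w - \eps_g$ is a lower bound of $\Eps_f$, hence $w \leq \eps_g$ for all $\eps_g \in \Eps_g$, forcing $w \leq 0$). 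For $\eps = \eps_f + \eps_g \in \Eps$ the hypotheses supply $\delta_f, \delta_g \in \Delta$, and downward directedness of $\Delta$ produces $\delta \in \Delta$ with $\delta \leq \delta_f \wedge \delta_g$. The triangle inequality applied to $[f(z)-f(c)-(z-c)f'(c)] + [g(z)-g(c)-(z-c)g'(c)]$ then yields the required bound $|z-c|\eps$.

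For $(ii)$, I would use the algebraic identity
\begin{align*}
&f(z)g(z) - f(c)g(c) - (z-c)\bigl[f'(c)g(c) + f(c)g'(c)\bigr] \\
&\quad = f(c)\bigl[g(z) - g(c) - (z-c)g'(c)\bigr] + g(c)\bigl[f(z) - f(c) - (z-c)f'(c)\bigr] \\
&\qquad + \bigl[f(z) - f(c)\bigr]\bigl[g(z) - g(c)\bigr].
\end{align*}
The first two summands are immediately bounded by $|z-c|\bigl(|f(c)|\eps_g + |g(c)|\eps_f\bigr)$. For the cross term, the triangle inequality gives $|f(z)-f(c)| \leq |z-c|(|f'(c)|+\eps_f)$ and analogously for $g$; fixing $\eps_f^0 \in \Eps_f$, $\eps_g^0 \in \Eps_g$ and restricting (by downward directedness) to $\eps_f \leq \eps_f^0$, $\eps_g \leq \eps_g^0$, the cross term is bounded by $|z-c|^2 M$ with $M := (|f'(c)|+\eps_f^0)(|g'(c)|+\eps_g^0)$. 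I then define $\Eps := \bigl\{|f(c)|\eps_g + |g(c)|\eps_f + \delta' M : \eps_f \in \Eps_f,\ \eps_g \in \Eps_g,\ \delta' \in \Delta,\ \eps_f \leq \eps_f^0,\ \eps_g \leq \eps_g^0\bigr\}$, and for each $\eps \in \Eps$ select $\delta \in \Delta$ below $\delta_f \wedge \delta_g \wedge \delta'$, so that $|z-c| \leq \delta \leq \delta'$ converts the cross-term bound into $|z-c|\delta' M$, exactly the third summand of $\eps$.

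The main obstacle is verifying $\inf \Eps = 0$ for the index set constructed in $(ii)$. This reduces to order continuity of multiplication by a fixed positive element in a Dedekind complete $\Phi$-algebra, so that $\inf(aA) = a \inf A$ for $a \geq 0$ and $A \downarrow 0$, together with the fact already invoked in $(i)$ that the sum of finitely many downward-directed positive sets with infimum $0$ has infimum $0$. The super order differentiable versions of both $(i)$ and $(ii)$ follow by the same arguments verbatim, simply letting $z$ range over $\dom(f) \cap \dom(g)$ rather than $N(c, r)$.
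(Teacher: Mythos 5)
The paper itself gives no proof of this proposition; it is quoted from \cite[Theorems~3.15 and 4.3]{differentiation}. Your argument is a correct, self-contained reconstruction along the standard classical lines: the sum/product decompositions are right, the domain is handled correctly via $r_f\wedge r_g$ (using that anything dominating a weak order unit is one), and you correctly isolate the one genuinely order-theoretic ingredient, namely that $\inf(aA)=a\inf A$ for $a\ge 0$ and $A\downarrow 0$ (order continuity of multiplication by a fixed element, i.e.\ of orthomorphisms), which is what makes $\inf\Eps=0$ in part $(ii)$; no gaps.
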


\begin{example}
For any $E$, it is routine to verify that $f(x) := x$ and $g(x) := c$, for some $c \in E$, are both super order differentiable everywhere with $f'(x) = e$ and $g'(x) = 0$. It follows from \Cref{p:properties_derivative} that any polynomial on $E$ is super order differentiable with the expected derivative.
\end{example}

The following proposition shows that order differentiability, as opposed to super order differentiability, is a local property.

\begin{proposition} \label{p:differentiation_local}
Let $E$ be a Dedekind complete real (or complex) $\Phi$-algebra, and additionally suppose that $f\colon \dom(f) \to E$ is order differentiable at $c \in \dom(f)$. If $g\colon \dom(g) \to E$ is defined on, and equal to, $f$ on $N(c, r)$ for some $r \gg 0$, then $g$ is order differentiable at $c$ with $g'(c) = f'(c)$.
\end{proposition}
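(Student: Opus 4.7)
The plan is to use $r' \defeq r \wedge r_f$ as the witness weak order unit for $g$ at $c$, where $r_f \gg 0$ is the weak order unit from the order differentiability of $f$ at $c$ (Definition d:diff). Since $r$ and $r_f$ are both weak order units, Proposition p:properties_bands(vi) gives $r' \gg 0$.

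First I would verify the neighborhood inclusion $N(c, r') \subseteq N(c, r) \cap N(c, r_f)$. For $z \in N(c, r')$, we have $r' - |z - c| \gg 0$; since $r - |z-c| \geq r' - |z-c|$ and $r_f - |z-c| \geq r' - |z-c|$, Proposition p:properties_bands(vii) yields that both $r - |z-c|$ and $r_f - |z-c|$ are weak order units, so $|z-c| \ll r$ and $|z - c| \ll r_f$. Consequently $N(c, r') \subseteq N(c, r) \subseteq \dom(g)$ and $N(c, r') \subseteq N(c, r_f) \subseteq \dom(f)$, and $g$ agrees with $f$ on $N(c, r')$ (in particular, $g(c) = f(c)$).

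For the limit condition, I would simply transfer the $\Eps$ provided by $f$. Given $\Delta \downarrow 0$, apply Definition d:diff to $f$ to obtain $\Eps \searrow 0$ such that for every $\eps \in \Eps$ there exists $\delta \in \Delta$ with
\[ z \in N(c, r_f) \text{ and } |z - c| \leq \delta \implies |f(z) - f(c) - (z - c) f'(c)| \leq |z - c|\eps. \]
Then for any $z \in N(c, r')$ with $|z - c| \leq \delta$ we have $z \in N(c, r_f)$ by the inclusion above, and $g(z) = f(z)$, $g(c) = f(c)$, so
\[ |g(z) - g(c) - (z-c) f'(c)| = |f(z) - f(c) - (z-c) f'(c)| \leq |z - c| \eps. \]
This is exactly the defining condition for $g$ to be order differentiable at $c$ with $g'(c) = f'(c)$, and uniqueness of this derivative is guaranteed by Proposition p:derivative_unique.

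There is no real obstacle here: the key observation is that order differentiability, unlike super order differentiability, only tests the difference quotient inequality on $z$ inside the preassigned neighborhood $N(c, r_f)$, so shrinking the neighborhood to $N(c, r')$ merely restricts the test set and automatically preserves the estimate. The only point that requires a brief argument is that $r'$ is itself a weak order unit and that $N(c, r')$ lies inside both $N(c, r)$ and $N(c, r_f)$, which is handled by parts (vi) and (vii) of Proposition p:properties_bands.
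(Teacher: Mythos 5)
Your proof is correct and follows essentially the same route as the paper's: both take the infimum $r \wedge r_f$ of the two weak order units as the new neighbourhood radius and then transfer the estimate from $f$ to $g$ verbatim, since the order-differentiability condition only tests points inside the prescribed neighbourhood. Your explicit verification that $N(c, r \wedge r_f) \subseteq N(c,r) \cap N(c,r_f)$ via Proposition \ref{p:properties_bands}(vi) and (vii) is a detail the paper leaves implicit, but it is the same argument.
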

\begin{proof}
Since $f$ is order differentiable at $c$, there exists an $r_1 \gg 0$ such that $N(c, r_1) \subseteq \dom(f)$, and for every $\Delta \downarrow 0$, there exists an $\Eps \searrow 0$ with the property that for all $\eps \in \Eps$ there exists a $\delta \in \Delta$ satisfying
\[ z \in N(c, r) \text{ and } \abs{z - c} \leq \delta \implies \abs{f(z) - f(c) - (z - c)f'(c)} \leq \abs{z - c} \eps. \]
Define $r_2 \defeq r \wedge r_1$, then $N(c, r_2) \subseteq \dom(g)$. Let $\eps, \delta$ be as above. Then, for all $z \in N(c, r_2)$ with $\abs{z - c} \leq \delta$,
\[ \abs{g(z) - g(c) - (z - c) f'(c)} = \abs{f(z) - f(c) - (z - c)f'(c)} \leq \abs{z - c} \eps. \]
Therefore, $g$ is order differentiable at $c$ with $g'(c) = f'(c)$.
\end{proof}

\section{Locally band preserving functions}

This section is dedicated to the notion of locally band preserving functions. The main reason being that order continuous functions on order intervals, in general, do not inherit the properties real-valued functions on real intervals posses. We start with \Cref{e:cont_bounded_fail}, where an example is given of an unbounded order continuous function on a closed order interval. \Cref{e:IVT_EVT_fail} shows that the Intermediate Value Theorem and the Extreme Value Theorem can fail for order continuous and order bounded functions defined on a closed order interval. These results are recovered for locally band preserving, order continuous functions in Section 5.

\begin{example}\label{e:cont_bounded_fail}
   Consider the Dedekind complete $\Phi$-algebra $L^0[0,1]$ of Le-besgue measurable functions on $[0,1]$, with $e$ being the constant one function. Then $[0,e] = L^1[0,1] \cap [0,e]$. Since $L^1[0,1]$ is a Banach lattice with order continuous norm $\|\cdot\|_1$, for this norm the order interval $[0,e]$ is a metric space that is not compact. It follows that there must exist a continuous function $f \colon [0,e] \to \bR$ that is unbounded. Indeed, for a sequence $(x_n)_{n \in \bN}$ in $[0,e]$ that does not have a norm convergent subsequence, we can define the function $g \colon \{x_n\colon n\in \bN\} \to \bR$ by $g(x_n) := n$. Since $\{x_n \colon n \in \bN\}$ is a closed and discrete set in $[0,e]$, and $g$ is continuous, the Tietze extension theorem implies that $g$ can be extended continuously to all of $[0,e]$, where it is necessarily unbounded as well. Note that if $x_\alpha \to x$ in order in $[0,e]$, then there is a set $\Eps \downarrow 0$ such that for all $\eps \in \Eps$ there is an $\alpha_0$ such that $|x_\alpha - x| \le \eps$ whenever $\alpha \ge \alpha_0$. It follows from the fact that $|x_\alpha - x| \le 2e$ for all $\alpha$ that we may assume without loss of generality that $\Eps \subseteq L^1[0,1]$. Hence $\|x_\alpha - x\|_1 \le \|\eps\|_1$, and since $\inf \{\|\eps\|_1 \colon \eps \in \Eps\} = 0$ as $\|\cdot\|_1$ is order continuous, this net converges in norm. Thus $|f(x) - f(x_\alpha)| \to 0$, and so $f$ is order continuous. In conclusion, the function $h \colon [0,e] \to L^0[0,1]$ defined by $h(x) := f(x)e$ provides an example of an order continuous function which is unbounded.       
\end{example}

\begin{example} \label{e:IVT_EVT_fail}
Let $f, g \colon [(0, 0), (1, 1)] \to \bR^2$ be given by $f(x, y) := (x, x^2)$ and $g(x, y) := (x, 1 - x)$. Then $f$ and $g$ are both order continuous. Since $f(0, 0) = (0, 0)$ and $f(1, 1) = (1, 1)$ but $(\frac12, \frac12) \not\in \range(f)$, the Intermediate Value Theorem fails for $f$. The Extreme Value Theorem fails for $g$, even though $g$ is order bounded, since
\[ \sup_{(x, y) \in [(0, 0), (1, 1)]} g(x, y) = (1, 1),\]
but this supremum is not attained.
\end{example}

The shortcomings of Example~\ref{e:IVT_EVT_fail} can be rectified by considering only locally band preserving functions. These functions were first introduced for Archimdedean real vector lattices in \cite{Wickstead-Zafer}. 

\begin{definition}
Let $E$ be a Dedekind complete real (or complex) $\Phi$-algebra, and consider $f \colon \dom(f) \to E$. We say that $f$ is \emph{locally band preserving} if for all $x, y \in \dom(f)$ and $z \in E$ we have that 
\[
|x-y| \wedge |z| = 0 \Longrightarrow |f(x) - f(y)| \wedge |z| = 0.
\]
\end{definition}

We next provide a characterisation of locally band preserving functions using band projections, which we will freely use throughout the remainder of this paper. 

\begin{proposition}
    Let $E$ be a Dedekind complete real (or complex) $\Phi$-algebra. A function $f \colon \dom(f) \to E$ is locally band preserving if and only if for every band projection $\bP$ and for all $x,y \in \dom(f)$, we have that 
    \[
    \bP(x) = \bP(y) \Longrightarrow \bP(f(x)) = \bP(f(y)).
    \]
\end{proposition}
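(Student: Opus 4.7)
The plan is to express both sides of the desired equivalence in the unified language of disjointness. The two key equivalences I will use are that, for a band $B$ in $E$ with projection $\bP$, one has $\bP(u) = 0$ if and only if $u \in B^d$, and for any $z \in E$, $|u| \wedge |z| = 0$ if and only if $u \in B_z^d$ (where $B_z$ denotes the band generated by $z$). Since the local band preserving condition and the band projection condition in the proposition are each built from one of these characterisations of disjointness, the equivalence essentially reduces to choosing the right band in each direction.

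\textbf{Forward direction.} Assume $f$ is locally band preserving. Fix a band projection $\bP$ with band $B$, and let $x,y \in \dom(f)$ satisfy $\bP(x) = \bP(y)$. Then $x - y \in B^d$, so $|x-y| \wedge b = 0$ for every $b \in B^+$. For any $z \in B$ I will note that $|z| \in B^+$ (in the complex case this uses that $B = B_F + iB_F$ for a band $B_F$ in the real part, together with closure of $B_F$ under the supremum defining the modulus), so $|x-y| \wedge |z| = 0$. The local band preserving property then yields $|f(x)-f(y)| \wedge |z| = 0$ for every $z \in B$, which forces $f(x)-f(y) \in B^d$, i.e., $\bP(f(x)) = \bP(f(y))$.

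\textbf{Backward direction.} Assume the band projection condition. Let $x,y \in \dom(f)$ and $z \in E$ with $|x-y| \wedge |z| = 0$. The natural choice of band is $B \defeq B_z$, with projection $\bP \defeq \bP_z$. Since $B_z$ is generated (as a band) by $|z|$ in $E_\rho$, the disjointness $|x-y| \wedge |z| = 0$ gives $x-y \in B_z^d$, hence $\bP(x) = \bP(y)$. The hypothesis then delivers $\bP(f(x)) = \bP(f(y))$, so $f(x)-f(y) \in B_z^d$, and therefore $|f(x)-f(y)| \wedge |z| = 0$, as required.

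\textbf{Main obstacle.} The proof is conceptually transparent once the right band is chosen in each direction, so there is no deep obstruction. The only delicate point is the bookkeeping in the complex setting: I must be careful that for a band $B$ in the complex $\Phi$-algebra $E$, membership in $B^d$ is still detected by disjointness of moduli and that $|z| \in B^+$ whenever $z \in B$. Both facts follow from the definition of complex bands as complexifications of real bands, as recorded in the preliminaries, but they are the step where the real and complex versions of the argument must be reconciled.
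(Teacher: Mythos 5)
Your proposal is correct and follows essentially the same route as the paper's own proof: in the forward direction you test disjointness of $x-y$ against all $z\in B$ and apply the locally band preserving property, and in the backward direction you choose the band $B_z$ generated by $z$, exactly as the paper does. The only cosmetic difference is that you phrase the conclusion via membership in $B^d$ where the paper computes $\bP(\abs{f(x)-f(y)})=0$ directly, which amounts to the same thing.
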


\begin{proof}
    Suppose $f$ is locally band preserving, and let $B$ be a band with corresponding band projection $\bP$. If $x, y \in \dom(f)$ are such that $\bP(x) = \bP(y)$, we have if $z \in B$, then $|x - y| \wedge |z| = 0$, so $|f(x) - f(y)| \wedge |z| = 0$. Hence $|\bP(f(x) - \bP(f(y))| = \bP(|f(x) - f(y)|) = 0$, and, therefore, $\bP (f(x)) = \bP(f(y))$. 

    Conversely, suppose that for all $x,y \in \dom(f)$ and for every band projection $\bP$, we have that $\bP(x) = \bP(y)$ implies $\bP(f(x)) = \bP(f(y))$. Let $z \in E$ be such that $|x-y| \wedge |z| = 0$. Then for the band $B_z$ with corresponding band projection $\bP_z$, it follows that $\bP_z(x) = \bP_z(y)$, so $\bP_z (f(x)) = \bP_z (f(y))$. Hence $|f(x) - f(y)| \wedge |z| = 0$.
\end{proof}

\begin{remark}
    Note that a locally band preserving function $f \colon E \to E$ is band preserving if $f(0) = 0$. Indeed, for any band $B$ in $E$ and $x \in B$, we have that $\bP^d(x) = \bP^d(0) = 0$, and thus $\bP^d(f(x)) = \bP^d(f(0))$, and hence we have $f(x) \in B$. Furthermore, it is readily checked that a linear function $f \colon E \to E$ is locally band preserving if and only if it is band preserving. Hence an order bounded linear function $f \colon E \to E$ is locally band preserving if and only if it is an orthomorphism.
\end{remark}

\begin{example}
In $\bR$, all functions are locally band preserving, as the only band projections are the zero and identity mappings. A function in $\ell^\infty$ or $\bR^\bN$ is locally band preserving if and only if it is defined coordinatewise. Thus, neither of the functions in \Cref{e:IVT_EVT_fail} is locally band preserving. 
\end{example}

However, if we view these $\Phi$-algebras as $\ell^\infty \cong C(\beta \bN)$ and $\bR^\bN \cong C^\infty(\beta\bN)$, then locally band preserving functions need not be defined pointwise, even if they are order continuous.

\begin{example}\label{E:pointwise}
For each $n \in \bN$, consider the continuous function $k_n \colon \bR \to \bR$ such that $k_n(t) = 0$ for $t \leq \frac{1}{2n}$, $k_n(t) = 1$ for $t \geq \frac{1}{n}$, and $k_n$ is linear on $[\frac{1}{2n}, \frac{1}{n}]$.

Define $f\colon \ell^\infty \to \ell^\infty$ by $f((x(n))_{n \in \bN}) := (k_n(x(n))))_{n \in \bN}$. It is clear that $f$ is locally band preserving since it is defined coordinatewise. To show that $f$ is order continuous, let $x_\alpha \to x$ in $\ell^\infty$. Then, for each $n \in \bN$, we have that $x_\alpha(n) \to x(n)$, and hence $(f(x_\alpha))(n) = k_n(x_\alpha(n)) \to k_n(x(n)) = (f(x))(n)$, since $k_n$ is continuous. Thus, $f(x_\alpha)$ converges to $f(x)$ in each coordinate. Since $f(x_\alpha)$ is bounded between $0$ and $e$, coordinatewise convergence implies order convergence, and so $f(x_\alpha) \to f(x)$.

Identifying $\ell^\infty$ with $C(\beta \bN)$,  we show that even though $f$ is locally band preserving, it does not act pointwise on $\beta \bN$. To this end, let $x := 0$ and $y$ be the extension of $(\frac{1}{n})_{n \in \bN}$ to $\beta \bN$. For every $t \in \beta \bN \setminus \bN$, $x(t) = 0 = y(t)$. However, $f(x) = 0$ and $f(y) = e$, so $f$ does not act pointwise at any point $t \in \beta \bN \setminus \bN$.
\end{example}

The following lemma is needed for the proof of \Cref{t:differentiable_implies_lbp}.

\begin{lemma} \label{l:super_differentiable_locally_lbp}
Let $E$ be a Dedekind complete real (or complex) $\Phi$-algebra, and assume that $f \colon \dom(f) \to E$ is super order differentiable on its entire domain. Then, for all $\delta \in E^+$, every $x \in \dom(f)$, and all band projections $\bP$, there exists a $k \in \bR^+\setminus\{0\}$ satisfying
\[ y \in \dom(f) \text{ and } \abs{y - x} \leq  k \delta \text{ and } \bP (x) = \bP (y) \implies \bP (f(y)) = \bP(f(x)). \] 
\end{lemma}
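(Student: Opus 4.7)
The plan is to exploit super order differentiability together with the facts that band projections are orthomorphisms, commute with the modulus, and satisfy $\bP(uv) = \bP(u) v$ by \Cref{p:properties_bands}$(i)$. These properties will collapse both sides of the defining differentiability inequality and force $\bP(f(y)) = \bP(f(x))$ as soon as $\bP(y - x) = 0$.

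First, I would set $\Delta := \{\tfrac{1}{n}\delta : n \in \bN\}$. This set is downward directed and has infimum $0$ by the Archimedean property of $E$, so $\Delta \downarrow 0$. (In the degenerate case $\delta = 0$, the set collapses to $\{0\}$, and the forthcoming choice of $k$ makes the statement trivial since $|y - x| \leq k\delta = 0$ forces $y = x$.) Super order differentiability of $f$ at $x$ then supplies a set $\Eps \searrow 0$. I would fix any $\eps_0 \in \Eps$ and take the corresponding $n_0 \in \bN$ such that $\delta' := \tfrac{1}{n_0}\delta \in \Delta$ satisfies
\[ z \in \dom(f) \text{ and } |z - x| \leq \delta' \implies |f(z) - f(x) - (z - x) f'(x)| \leq |z - x|\, \eps_0. \]
Setting $k := 1/n_0 \in \bR^+ \setminus \{0\}$, we then have $k\delta = \delta'$.

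Now, for any $y \in \dom(f)$ with $|y - x| \leq k\delta$ and $\bP(x) = \bP(y)$, the super order differentiability inequality above applies. Applying the positive linear map $\bP$ to both sides and using that it commutes with the modulus (being a lattice homomorphism onto its image band), together with \Cref{p:properties_bands}$(i)$, the left-hand side reduces to $|\bP(f(y)) - \bP(f(x)) - \bP(y - x)\, f'(x)|$, while the right-hand side reduces to $|\bP(y - x)|\, \eps_0$. The assumption $\bP(x) = \bP(y)$ forces $\bP(y - x) = 0$, so the cross term on the left and the entire right-hand side vanish, leaving $|\bP(f(y)) - \bP(f(x))| \leq 0$, which yields the desired equality.

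No substantial obstacle is anticipated in this proof; the only routine points to verify are the orthomorphism identities for $\bP$ pulling through the product $(y-x)f'(x)$ and the modulus, and the choice of $\Delta$ so that some element of $\Delta$ is exactly a real multiple of the given $\delta$. The latter is the small but essential trick that converts the purely qualitative statement of super order differentiability into the quantitative bound $|y - x| \leq k\delta$ required by the lemma.
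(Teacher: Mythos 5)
Your proof is correct and follows essentially the same route as the paper's: the paper takes $\Delta := \{k\delta : k \in \bR^+\setminus\{0\}\}$ where you take the countable subfamily $\{\tfrac{1}{n}\delta\}$, but in both cases super order differentiability yields the quantitative bound on $\{|y-x| \leq k\delta\}$, and applying $\bP$ kills the term $(y-x)f'(x)$ and the right-hand side since $\bP(y-x)=0$. The minor differences (your explicit handling of $\delta = 0$ and the choice of index set for $\Delta$) are immaterial.
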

\begin{proof}
Let $\delta \in E^+$, $x \in \dom(f)$, and $\bP$ be any band projection. Define 
\[
\Delta \defeq \{ k \delta \colon k \in \bR^+\setminus\{0\} \}. 
\]
Then there exists an $\Eps \searrow 0$ such that, for all $\eps \in \Eps$, there is a $k_\eps \in \bR^+\setminus\{0\}$ for which
\[ y \in \dom(f) \text{ and } \abs{y - x} \leq k_\eps \delta \implies \abs{f(y) - f(x) - (y - x)f'(x)} \leq \abs{y - x} \eps. \]
Fix any $\eps \in \Eps$. Then, for all $y \in \dom(f)$ satisfying $\abs{y - x} \leq k_\eps \delta$ and $\bP(x) = \bP(y)$, we apply $\bP$ to both sides of the above inequality to show that
\[ \bP \left( \abs{f(y) - f(x)} \right) = \bP \left( \abs{f(y) - f(x) - (y - x) f'(x)} \right) \leq \bP \left(\abs{y - x} \eps \right) = 0.\] 
Therefore, $\bP(f(y)) = \bP(f(x))$.
\end{proof}

It will now be shown that any super order differentiable function 
\[
f\colon N(c, r) \to E 
\]
is necessarily locally band preserving, which makes locally band preserving functions a natural class of functions to study in this context.

\begin{theorem} \label{t:differentiable_implies_lbp}
Let $E$ be a Dedekind complete real (or complex) $\Phi$-algebra, and consider $c,r\in E$ with $r\gg 0$. If $f \colon N(c, r) \to E$ is super order differentiable, then $f$ is locally band preserving.
\end{theorem}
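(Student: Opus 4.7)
The plan is to use the band-projection characterisation of locally band preserving functions established in the preceding proposition. Fix a band projection $\bP$ and take arbitrary $x, y \in N(c,r)$ with $\bP(x) = \bP(y)$; the task is then to show $\bP(f(x)) = \bP(f(y))$. The strategy is to join $x$ to $y$ by the straight segment $z_t := (1-t)x + ty$, $t \in [0,1]$, and run a clopen-set argument on $[0,1]$.

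First I would verify that the entire segment lies in $\dom(f) = N(c,r)$. The triangle inequality gives $|z_t - c| \le (1-t)|x-c| + t|y-c|$, so
\[
r - |z_t - c| \ge (1-t)(r - |x-c|) + t(r - |y-c|).
\]
For any $t \in [0,1]$ at least one of $1-t, t$ is strictly positive, whence at least one summand on the right is a positive scalar multiple of a weak order unit and hence itself a weak order unit. Two applications of \Cref{p:properties_bands}$(vii)$ then show that the right-hand side and, in turn, $r - |z_t - c|$ are weak order units, so $z_t \in N(c,r)$. Linearity of $\bP$ also gives $\bP(z_t) = (1-t)\bP(x) + t\bP(y) = \bP(x)$ for every $t$.

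Now set $S := \{t \in [0,1] : \bP(f(z_t)) = \bP(f(x))\}$, which contains $0$. For any $t_0 \in [0,1]$, \Cref{l:super_differentiable_locally_lbp} applied at $z_{t_0}$ with $\delta := |y - x|$ and the band projection $\bP$ produces a scalar $k > 0$ such that
\[
w \in N(c,r),\ |w - z_{t_0}| \le k|y - x|,\ \bP(w) = \bP(z_{t_0}) \implies \bP(f(w)) = \bP(f(z_{t_0})).
\]
For any $s \in [0,1]$ with $|s - t_0| \le k$, the point $w := z_s$ fulfils all three hypotheses because $|z_s - z_{t_0}| = |s - t_0| \, |y - x|$ and $\bP(z_s) = \bP(z_{t_0})$, so $\bP(f(z_s)) = \bP(f(z_{t_0}))$. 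This immediately forces both $S$ and $[0,1]\setminus S$ to be open in $[0,1]$; connectedness of $[0,1]$ together with $0 \in S$ then yields $S = [0,1]$, and $1 \in S$ is exactly the desired equality $\bP(f(y)) = \bP(f(x))$.

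The conceptual heavy lifting is already encapsulated in \Cref{l:super_differentiable_locally_lbp}; the remaining work is the clopen/connectedness argument together with the check that the segment stays inside $N(c,r)$, which is where the weak-order-unit structure from \Cref{p:properties_bands} really matters. I expect the segment-in-$N(c,r)$ verification to be the most easily overlooked point rather than a deep obstacle.
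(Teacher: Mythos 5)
Your proposal is correct and follows essentially the same route as the paper: join $x$ to $y$ by the segment $z_t=(1-t)x+ty$, apply \Cref{l:super_differentiable_locally_lbp} with $\delta=|y-x|$ to get local constancy of $t\mapsto\bP(f(z_t))$, and conclude; the paper finishes with a compactness/finite-chain argument where you use clopenness and connectedness of $[0,1]$, which is an equivalent cosmetic variant. Your explicit check that the segment stays in $N(c,r)$ via \Cref{p:properties_bands}$(vii)$ is a nice touch that the paper leaves implicit.
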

\begin{proof}
Let $I \defeq \{t \in \bR \colon 0 \leq t \leq 1 \}$. Suppose $x, y \in N(c, r)$ with $\bP(x) = \bP(y)$ for some band projection $\bP$. Define $x_t \defeq (1 - t)x + ty$ for all $t \in I$, and let $L[x, y] \defeq \{x_t \colon t \in I \}$. Then $L[x, y] \subseteq N(c, r)$, and for all $s,t \in I$, 
\begin{align*}
\bP(x_t) = (1 - t) \bP(x) + t \bP(y) = \bP(x),\ \text{ and}\ \ \ \abs{x_s - x_t}  = \abs{s - t} \abs{x - y}.
\end{align*}

Define $g \colon I\to E$ by $g(t) := \bP(f(x_t))$. Let $\delta := \abs{x - y}$ and $t \in I$. By \Cref{l:super_differentiable_locally_lbp}, there exists a $k_t \in \bR^+\setminus\{0\}$ such that
\[ z \in N(c, r) \text{ and } \abs{z - x_t} \leq k_t \delta \text{ and } \bP(z) = \bP(x_t) \implies \bP(f(z)) = \bP(f(x_t)). \]

In particular, if $s \in I$ and $\abs{s - t} \leq k_t$, then $\abs{x_s - x_t} = \abs{s - t} \abs{x - y} \leq k_t \delta$, so $g(s) = \bP (f(x_s)) = \bP(f(x_t)) = g(t)$. Thus, $g$ is constant on 
\[
I_t \defeq \{ s \in I \colon \abs{s - t} \leq k_t \} 
\]
for all $t \in I$.

By compactness, $I \subseteq \bigcup_{i = 1}^n I_{t_i}$ for some $t_1, \dots, t_n \in I$. Then there exist points $s_1, \dots, s_m \in I$ such that $s_1 = 0$, $s_m = 1$, and consecutive points lie in the same interval $I_{t_i}$ for some $i$. This shows that
\[ \bP(f(x)) = g(s_1) = g(s_2) = \dots = g(s_m) = \bP(f(y)). \hfill \qedhere\]
\end{proof}

\begin{corollary} \label{l:differentiable_implies_lbp}
Let $E$ be a Dedekind complete real (or complex) $\Phi$-algebra, and consider $c,r\in E$ with $r\gg 0$. If $f \colon \barN(c, r) \to E$ is order continuous on $\barN(c, r)$ and super order differentiable on $N(c, r)$, then $f$ is locally band preserving.
\end{corollary}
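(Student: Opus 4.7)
The plan is to deduce the corollary from \Cref{t:differentiable_implies_lbp} by approximating points of $\barN(c,r)$ from inside $N(c,r)$ in a manner that respects a prescribed band projection. Fix a band projection $\bP$ and take $x,y \in \barN(c,r)$ with $\bP(x) = \bP(y)$; the goal is to show $\bP(f(x)) = \bP(f(y))$. The strategy is to build sequences $x_n, y_n \in N(c,r)$ with $x_n \to x$, $y_n \to y$, and $\bP(x_n) = \bP(y_n)$ for every $n$, then invoke \Cref{t:differentiable_implies_lbp} on $f|_{N(c,r)}$ to obtain $\bP(f(x_n)) = \bP(f(y_n))$, and finally pass to the limit using the order continuity of $f$ together with the fact that band projections preserve order convergence.

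For the construction I would take the natural interior interpolants
\[ x_n \defeq c + \tfrac{n}{n+1}(x - c) \quad \text{and} \quad y_n \defeq c + \tfrac{n}{n+1}(y - c). \]
Verifying $x_n \in N(c,r)$ amounts to noting $|x_n - c| = \tfrac{n}{n+1}|x - c| \leq \tfrac{n}{n+1} r$, so $r - |x_n - c| \geq \tfrac{1}{n+1} r$, which is a weak order unit by \Cref{p:properties_bands}(vii); hence $|x_n - c| \ll r$. Order convergence $x_n \to x$ follows from $|x_n - x| \leq \tfrac{1}{n+1} r \downarrow 0$, and linearity of $\bP$ yields
\[ \bP(x_n) = \tfrac{1}{n+1}\bP(c) + \tfrac{n}{n+1}\bP(x) = \tfrac{1}{n+1}\bP(c) + \tfrac{n}{n+1}\bP(y) = \bP(y_n). \]
For the limit, from $f(x_n) \to f(x)$ and the identity $|\bP(z)| = \bP(|z|)$, the same witness $\Eps \searrow 0$ gives $\bP(f(x_n)) \to \bP(f(x))$, and likewise $\bP(f(y_n)) \to \bP(f(y))$. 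Since the two sequences agree termwise, the uniqueness of order limits forces $\bP(f(x)) = \bP(f(y))$.

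I do not anticipate a real obstacle: the only step that demands care is the verification that each interpolant genuinely lies in the \emph{open} neighbourhood $N(c,r)$, which is precisely where \Cref{p:properties_bands}(vii) is needed to upgrade the inequality $|x_n - c| \leq \tfrac{n}{n+1} r$ to the strict condition $|x_n - c| \ll r$. Everything else is either a direct appeal to \Cref{t:differentiable_implies_lbp} or a routine continuity manipulation.
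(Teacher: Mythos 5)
Your proposal is correct and matches the paper's own proof essentially verbatim: both approximate $x,y$ by the interior interpolants $c + (1-\tfrac{1}{n})(x-c)$ and $c+(1-\tfrac{1}{n})(y-c)$, apply \Cref{t:differentiable_implies_lbp} on $N(c,r)$, and pass to the limit via order continuity. Your extra verification that the interpolants lie in the open neighbourhood (via \Cref{p:properties_bands}(vii)) is a detail the paper leaves implicit, but the argument is the same.
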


\begin{proof}
Let $x, y \in \barN(c, r)$ with $\bP(x) = \bP(y)$. Define $x_n \defeq c + (1 - \frac{1}{n}) (x - c)$ and $y_n \defeq c + (1 - \frac{1}{n}) (y - c)$ for all $n \in \bN$. Then $x_n \to x$, $y_n \to y$, and for all $n \in \bN$, we have that $x_n, y_n \in N(c, r)$ and $\bP(x_n) = \bP(y_n)$, so that $\bP (f(x_n)) = \bP(f(y_n))$ by \Cref{t:differentiable_implies_lbp}. Since $f$ is order continuous,
\[ \bP (f(x)) = \lim_{n \to \infty} \bP(f(x_n)) = \lim_{n \to \infty} \bP(f(y_n)) = \bP(f(y)). \hfill \qedhere\]
\end{proof}

We next record some elementary properties of locally band preserving functions defined on $[a, b]$ that we require in Section 5. The recurring idea is that one can restrict properties of $f$ (such as being order bounded or order continuous) to individual bands.

\begin{proposition} \label{l:restricting_to_bands}
Let $E$ be a Dedekind complete real $\Phi$-algebra. Suppose that $a,b,c,d\in E$ satisfy $a\leq c \leq d \leq b$. Let $f \colon [a, b] \to E$ be locally band preserving. The following statements hold.
\begin{itemize}
\item[$(i)$] If $f$ is order bounded above on $[c, d]$ by $M$, then for all $x \in [a, b]$, $\bP_{c \leq x} \bP_{x \leq d} f(x) \leq \bP_{c \leq x} \bP_{x \leq d} (M)$.
\item[$(ii)$] If $f$ is order bounded below on $[c, d]$ by $m$, then for all $x \in [a, b]$, $\bP_{c \leq x} \bP_{x \leq d} f(x) \geq \bP_{c \leq x} \bP_{x \leq d} (m)$.
\item[$(iii)$] Let $x \in [a, b]$ and $\eps, \delta\in E^+$. If for all $y \in [a, b]$, $\abs{x - y} \leq \delta$ implies $\abs{f(x) - f(y)} \leq \eps$; then, for any band projection $\bP$, $\bP (\abs{x - y}) \leq \bP (\delta)$ implies $\bP (\abs{f(x) - f(y)}) \leq \bP (\varepsilon)$.
\end{itemize}
\end{proposition}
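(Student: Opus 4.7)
The unifying strategy for all three parts is a \emph{patching} construction: given a problematic element (an $x \in [a,b]$ that might not lie in $[c,d]$, or a $y$ for which $|x-y|\not\le \delta$), I will build a modified element $\widetilde{x}$ (respectively $\widetilde{y}$) that (a) lies globally in the required subset so that the hypothesis applies, and (b) agrees with the original element under the relevant band projection $\bP$. Local band preservation will then transfer the conclusion from the tame version back to the original.

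For part $(i)$, I would set $\bP := \bP_{c \le x}\bP_{x \le d}$. By \Cref{p:bands_largest_characterization}, applied in turn to each factor (which commute by \Cref{p:properties_bands}$(v)$), this projection satisfies $\bP(c) \le \bP(x) \le \bP(d)$. I then define $\widetilde{x} := \bP(x) + \bP^d(c)$. To see $\widetilde{x} \in [c,d]$, I use the band decomposition $c = \bP(c) + \bP^d(c)$ and $d = \bP(d) + \bP^d(d)$, the inequalities $\bP(c) \le \bP(x) \le \bP(d)$ on the $\bP$-band, and $\bP^d(c) \le \bP^d(d)$ (which follows from $c \le d$) on the complement. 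Hence $f(\widetilde{x}) \le M$, so $\bP(f(\widetilde{x})) \le \bP(M)$. Since $\bP(\widetilde{x}) = \bP(x)$, local band preservation gives $\bP(f(x)) = \bP(f(\widetilde{x})) \le \bP(M)$. Part $(ii)$ is symmetric: take instead $\widetilde{x} := \bP(x) + \bP^d(d)$, which is again in $[c,d]$ and lets the lower bound $m$ be transferred the same way.

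For part $(iii)$, I fix an arbitrary band projection $\bP$ and a pair $x,y \in [a,b]$ with $\bP(|x-y|) \le \bP(\delta)$, and put $\widetilde{y} := \bP(y) + \bP^d(x)$. A short calculation gives $x - \widetilde{y} = \bP(x-y)$, hence $|x - \widetilde{y}| = \bP(|x-y|) \le \bP(\delta) \le \delta$. The decomposition argument used above, together with $a \le x,y \le b$, also yields $\widetilde{y} \in [a,b]$. Thus the hypothesis applies at $x$ and $\widetilde{y}$, producing $|f(x) - f(\widetilde{y})| \le \eps$. Because $\bP(\widetilde{y}) = \bP(y)$, local band preservation gives $\bP(f(y)) = \bP(f(\widetilde{y}))$, and hence $\bP(|f(x) - f(y)|) = |\bP(f(x) - f(y))| = |\bP(f(x) - f(\widetilde{y}))| = \bP(|f(x) - f(\widetilde{y})|) \le \bP(\eps)$, using that the band projection $\bP$ is a lattice homomorphism.

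The only slightly delicate step is the verification that each patched element lies in the required order interval; once that bookkeeping is handled via the identity $z = \bP(z) + \bP^d(z)$ and the fact that the factor-projections of $\bP_{c\le x}\bP_{x \le d}$ commute, everything else is a direct application of the definition of locally band preserving combined with monotonicity of $\bP$.
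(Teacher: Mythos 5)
Your proposal is correct and follows essentially the same route as the paper's proof: the same patched elements ($\bP(x)+\bP^d(c)$ for $(i)$ and $\bP(y)+\bP^d(x)$ for $(iii)$), with local band preservation transferring the conclusion back. The only cosmetic difference is your choice of $\bP(x)+\bP^d(d)$ in $(ii)$, which works just as well as the paper's symmetric variant.
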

\begin{proof}
To prove (i), let $x \in [a, b]$. Define the band projection $\bP \defeq \bP_{c \leq x} \bP_{x \leq d}$, and let $y \defeq \bP (x) + \bP^d (c)$. Then $\bP(c) \leq \bP(x) \leq \bP(d)$, $\bP(y) = \bP(x)$, and $\bP^d(c) = \bP^d(y) \le \bP^d(d)$. Together, these statements show that $y \in [c, d]$ and hence $f(y) \leq M$. Since $\bP(y) = \bP(x)$, we have $\bP(f(x)) = \bP(f(y)) \leq \bP(M)$. The argument for (ii) is similar. 

We now prove (iii). Let $\eps, \delta\in E^+$ and $x \in [a, b]$ be such that for all $y \in [a, b]$, $\abs{x - y} \leq \delta$ implies $\abs{f(x) - f(y)} \leq \eps$. Let $y \in [a, b]$ and $\bP$ be any band projection such that $\bP (\abs{x - y}) \leq \bP (\delta)$. Define $z \defeq \bP(y) + \bP^d(x)$, then $\abs{x - z} \leq \delta$, so we know that $\abs{f(x) - f(z)} \leq \eps$. Since $\bP(y) = \bP(z)$, we have that $\bP(f(y)) = \bP(f(z))$, and so $\bP \left( \abs{f(x) - f(y)} \right) = \bP \left( \abs{f(x) - f(z)} \right) \leq \bP (\eps)$. 
\end{proof}



\section{Classical theorems for continuous and differentiable functions} \label{s:classical_theorems}

In this section, we generalise some classical results for continuous and differentiable real-valued functions to a Dedekind complete real $\Phi$-algebra $E$, but, in light of \Cref{e:cont_bounded_fail} and Example~\ref{e:IVT_EVT_fail}, adding the requirement that the functions are locally band preserving. 

\begin{lemma} \label{l:bounded_subintervals}
Let $E$ be a Dedekind complete real $\Phi$-algebra and consider $a, b, c, d \in E$ with $a \leq b$ and $c \leq d$. Let $f \colon [a \wedge c, b \vee d] \to E$ be locally band preserving, order bounded on $[a, b]$, and order bounded on $[c, d]$. If $[a, b] \cap [c, d] \neq \varnothing$, then $f$ is order bounded on $[a \wedge c, b \vee d]$.
\end{lemma}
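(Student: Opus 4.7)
The strategy is to decompose any $x \in [a \wedge c, b \vee d]$ via two complementary band projections, one isolating the part of $x$ that ``lives in'' $[a,b]$ and the other the part that ``lives in'' $[c,d]$, then apply \Cref{l:restricting_to_bands} on each band and assemble the pieces. To set the stage, fix $m_i, M_i \in E$ with $m_1 \le f \le M_1$ on $[a,b]$ and $m_2 \le f \le M_2$ on $[c,d]$, and, for each $x \in [a \wedge c, b \vee d]$, set
\[
B_1 := B_{a \le x} \cap B_{x \le b}, \qquad B_2 := B_{c \le x} \cap B_{x \le d}.
\]
By \Cref{l:restricting_to_bands}(i)--(ii) applied to each of the intervals $[a,b]$ and $[c,d]$, one has $\bP_i(m_i) \le \bP_i f(x) \le \bP_i(M_i)$ for $i = 1,2$.

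The key step is to show $B_1 + B_2 = E$, equivalently $B_1^d \cap B_2^d = \{0\}$. Using $B_{a \le x} = B_{x < a}^d$ (and similarly for the other three), together with the De Morgan identity $(A \cap B)^d = A^d + B^d$ in the distributive lattice of bands (\Cref{p:properties_bands}(ix)), this intersection equals
\[
(B_{x<a} + B_{b<x}) \cap (B_{x<c} + B_{d<x}),
\]
which I would distribute into four intersections. The bands $B_{x<a} \cap B_{x<c}$ and $B_{b<x} \cap B_{d<x}$ are trivial: any nontrivial common subband $B$ would, by \Cref{p:properties_bands}(vi), give rise to a nonzero weak order unit equal to $\bP((a \wedge c) - x)$ (respectively $\bP(x - (b \vee d))$), which is impossible because $x \ge a \wedge c$ (respectively $x \le b \vee d$) forces this element to be $\le 0$. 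The bands $B_{x<a} \cap B_{d<x}$ and $B_{b<x} \cap B_{x<c}$ are trivial because any $p \in [a,b] \cap [c,d]$ yields the global inequalities $a \le d$ and $c \le b$; then by \Cref{p:properties_bands}(vii) (so that the sum of two positive weak order units is again a weak order unit), a nontrivial common subband would force $\bP(a-d)$ or $\bP(c-b)$ to be a nonzero weak order unit while simultaneously being $\le 0$.

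From $B_1 + B_2 = E$ one obtains $B_1^d \subseteq B_2$, and hence $I - \bP_1 = (I - \bP_1)\bP_2$. Therefore
\[
f(x) = \bP_1 f(x) + (I - \bP_1)\bP_2 f(x),
\]
which, combined with the component-wise bounds above and the elementary inequalities $\bP_i M_i \le |M_i|$ and $\bP_i m_i \ge -|m_i|$, gives
\[
-(|m_1| + |m_2|) \;\le\; f(x) \;\le\; |M_1| + |M_2|
\]
uniformly in $x \in [a \wedge c, b \vee d]$. This is the desired order boundedness.

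I expect the main obstacle to be the verification that $B_1 + B_2 = E$: this is precisely where the hypothesis $[a,b] \cap [c,d] \neq \varnothing$ gets used in an essential way, and it requires some careful bookkeeping within the distributive lattice of bands combined with the positivity of weak order units. The remaining steps are essentially arithmetic consequences of \Cref{l:restricting_to_bands}.
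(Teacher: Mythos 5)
Your proof is correct, and it reaches the conclusion by a somewhat different route than the paper, although both arguments ultimately rest on \Cref{l:restricting_to_bands} and on covering $E$ by bands on which $x$ can be exchanged for an element of $[a,b]$ or of $[c,d]$. The paper fixes $z \in [a,b]\cap[c,d]$, splits $E = B_{x<z}\oplus B_{z\le x}$, and on each summand splits once more by $B_{a\le x}$ versus $B_{a>x}$ (resp.\ the analogous bands for $d$); the only nontrivial observation there is the containment $B_{a>x}\subseteq B_{c\le x}$, which follows from $B_{x<a}\cap B_{x<c}=B_{x<a\wedge c}=\{0\}$. You instead work directly with the two ``membership'' bands $B_1 = B_{a\le x}\cap B_{x\le b}$ and $B_2 = B_{c\le x}\cap B_{x\le d}$ and show $B_1+B_2=E$ by a four-term distributive computation; the hypothesis $[a,b]\cap[c,d]\neq\varnothing$ enters only through the inequalities $a\le d$ and $c\le b$, which are in fact equivalent to it since they force $a\vee c$ into the intersection. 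Your version requires checking four trivial intersections rather than one containment, and your weak-order-unit bookkeeping is applied relatively inside subbands (legitimate, since a projection band of $E$ is itself a Dedekind complete $\Phi$-algebra with unit $\bP(e)$, and the paper uses such relative arguments elsewhere, e.g.\ in the proof of \Cref{t:EVT}); in exchange, your argument is symmetric in $[a,b]$ and $[c,d]$ and isolates precisely where the nonempty-intersection hypothesis is used. The resulting bounds differ cosmetically ($M_1\vee M_2$ in the paper versus $|M_1|+|M_2|$ for you), but both are uniform in $x$, which is all that is needed.
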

\begin{proof}
Suppose $\abs{f(x)} \leq M_1$ for all $x \in [a, b]$ and $\abs{f(x)} \leq M_2$ for all $x \in [c, d]$. Let $z \in [a, b] \cap [c, d]$. For any $x \in [a \wedge c, b \vee d]$, we have that
\[ B_{a > x} \cap B_{c > x} = B_{a \wedge c > x} = \{0\}, \]
and hence $B_{a > x} \subseteq B_{c \leq x}$. Now,
\[ \bP_{x < z} \bP_{a \leq x} (x) \in \bP_{x < z} \bP_{a \leq x} ([a, b]), \]
and $\bP_{x < z} \bP_{a > x}(x) \in \bP_{x < z} \bP_{a > x} ([c, d])$.
Thus, by \Cref{l:restricting_to_bands}(i) and (ii),
\begin{align*}
 \bP_{x < z} (\abs{f(x)})
 & = \bP_{x < z} \bP_{a \leq x} (\abs{f(x)}) + \bP_{x < z} \bP_{a > x} (\abs{f(x)}) \\
 & \leq \bP_{x < z} \bP_{a \leq x} (M_1) + \bP_{x < z} \bP_{a > x} (M_2) \\
 & \leq \bP_{x < z} (M_1 \vee M_2).
\end{align*}
Similarly, $\bP_{x \geq z} (\abs{f(x)}) \leq \bP_{x \geq z} (M_1 \vee M_2)$, and hence $\abs{f(x)} \leq M_1 \vee M_2$.
\end{proof}

\bigskip

The fact that order boundedness extends on connected order intervals from the above lemma, in combination with the property of a function being locally band preserving, provides a boundedness theorem.  

\begin{theorem}[The Boundedness Theorem] \label{t:BT}
Let $E$ be a Dedekind complete real $\Phi$-algebra. Consider $a,b \in E$ such that $a \le b$. If $f \colon [a, b] \to E$ is order continuous and locally band preserving, then $f$ is order bounded.
\end{theorem}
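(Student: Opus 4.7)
My approach is a supremum argument adapted to the Dedekind complete $\Phi$-algebra setting, in which \Cref{l:bounded_subintervals} plays the role of the classical compactness step, and the freedom to choose $\Delta$ in the $\eps$-$\delta$ formulation of order continuity plays the role of the classical density step.

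Define $T := \{c \in [a,b] : f \text{ is order bounded on } [a,c]\}$, which contains $a$. I would first check that $T$ is upward directed: given $c_1, c_2 \in T$, the order intervals $[a, c_1]$ and $[a, c_2]$ share the point $a$, so \Cref{l:bounded_subintervals} (this is where local band preservation enters) gives $c_1 \vee c_2 \in T$. By Dedekind completeness, $s := \sup T$ exists with $a \leq s \leq b$, and $(t)_{t \in T}$ is an increasing net with $t \uparrow s$.

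To show $s \in T$, I would feed $\Delta := \{s - t : t \in T\} \downarrow 0$ into the order continuity of $f$ at $s$, producing some $\Eps \searrow 0$. Fixing any $\eps_0 \in \Eps$ yields a $t_0 \in T$ such that $|f(y) - f(s)| \leq \eps_0$ whenever $y \in [a,b]$ satisfies $|y - s| \leq s - t_0$. Since $[t_0, s] \subseteq \barN(s, s - t_0) \cap [a,b]$, $f$ is bounded on $[t_0, s]$; combining with boundedness on $[a, t_0]$ via \Cref{l:bounded_subintervals} (the intervals share $t_0$) shows $s \in T$. To argue $s = b$, suppose for contradiction that $b - s \neq 0$. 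Order continuity at $s$ applied to $\Delta = \{(1/n)e : n \in \bN\}$ yields a weak order unit $\delta = (1/n_0)e$ and an $M \in E^+$ with $|f(y)| \leq M$ for all $y \in \barN(s, \delta) \cap [a,b]$. Because $\delta$ is a weak order unit and $b - s \geq 0$ is nonzero, $r := \delta \wedge (b - s) \neq 0$, so $s' := s + r$ satisfies $s \leq s' \leq b$ with $s \neq s'$. Since $[s, s'] \subseteq \barN(s, \delta) \cap [a, b]$, $f$ is bounded on $[s, s']$, and \Cref{l:bounded_subintervals} applied to $[a, s]$ and $[s, s']$ shows $s' \in T$, contradicting that $s$ is an upper bound of $T$.

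The main obstacle will be the step $s \in T$: in a general Dedekind complete vector lattice, a net decreasing to $0$ need not become eventually dominated by a prescribed positive element, so one cannot match a single $\delta$ pulled from order continuity against the increasing net in $T$. The resolution is to exploit precisely the \emph{``for every $\Delta \downarrow 0$''} clause in the $\eps$-$\delta$ reformulation of \Cref{p:eps_delta_continuity} and select $\Delta := \{s - t : t \in T\}$ directly.
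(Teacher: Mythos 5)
Your proof is correct and follows essentially the same route as the paper: the same set of points up to which $f$ is order bounded, upward directedness via \Cref{l:bounded_subintervals}, and order continuity at the supremum applied to a $\Delta$ tailored to that set. The only difference is one of packaging: the paper merges your two applications of order continuity into a single one by taking $\Delta = \{(c-x)\vee \tfrac1n(b-c) \colon x \in S,\ n \in \bN\}$, which simultaneously yields boundedness on $[x,c]$ and on $[c, c+\tfrac1n(b-c)]$, whereas you first show $\sup T \in T$ using $\Delta = \{s-t \colon t \in T\}$ and then rule out $s \neq b$ using $\Delta = \{\tfrac1n e \colon n \in \bN\}$ together with the (valid) observation that $\delta \wedge (b-s) \neq 0$ when $\delta$ is a weak order unit and $b - s \neq 0$.
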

\begin{proof}
Let $S \defeq \{x \in [a, b] \colon f \text{ is order bounded on } [a, x]\}$. Then 
\[
c \defeq \sup(S) \in [a, b]. 
\]
It is clear from \Cref{l:bounded_subintervals} that $S$ is upwards directed.

Consider $\Delta \defeq \{(c - x) \vee \frac{1}{n} (b - c): x \in S, n \in \bN \} \downarrow 0$. Since $f$ is order continuous at $c$, there is some $\eps\in E^+$ and $\delta = (c - x) \vee \frac{1}{n}(b - c)$, for $x \in S$ and $n \in \bN$, such that
\[ y \in [a, b] \text{ and } \abs{y - c} \leq \delta \implies \abs{f(y) - f(c)} \leq \eps \implies \abs{f(y)} \leq \abs{f(c)} + \eps. \]
This shows that $f$ is order bounded on $[x, c]$ and on $[c, c + \frac{1}{n}(b - c)]$, and we already know that $f$ is order bounded on $[a, x]$. By \Cref{l:bounded_subintervals}, $c \in S$ and $c + \frac{1}{n}(b - c) \in S$. Therefore, $c \leq c + \frac{1}{n}(b - c)  \leq c$ which implies that $c = b$.
\end{proof}

We next prove an Intermediate Value Theorem for locally band preserving functions.

\begin{theorem}[The Intermediate Value Theorem] \label{t:IVT}
Let $E$ be a Dedekind complete real $\Phi$-algebra. Consider $a,b \in E$ with $a \le b$. If $f \colon [a, b] \to E$ is order continuous and locally band preserving, and $y \in [f(a) \wedge f(b), f(a) \vee f(b)]$, then there exists $c \in [a, b]$ such that $f(c) = y$.
\end{theorem}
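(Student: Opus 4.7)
The approach is classical bisection, adapted to the $\Phi$-algebra setting by using band projections in place of the trichotomy of the reals. I would first reduce the general hypothesis $y \in [f(a) \wedge f(b), f(a) \vee f(b)]$ to the cleaner case $f(a) \leq y \leq f(b)$, and then construct bisection sequences $(a_n)$ and $(b_n)$ in $[a,b]$ converging to a common limit $c$ satisfying $f(c) = y$.

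For the reduction, let $B := B_{f(a) \leq f(b)}$ with projection $\bP$. The hypothesis yields $\bP f(a) \leq \bP y \leq \bP f(b)$ on $B$ and $\bP^d f(b) \leq \bP^d y \leq \bP^d f(a)$ on $B^d$. Each of $B$ and $B^d$ is itself a Dedekind complete real $\Phi$-algebra (with units $\bP e$ and $\bP^d e$), so applying the reduced case on $B$ to the function $\bar f(\bar x) := \bP f(\bar x + \bP^d a)$ on $[\bP a, \bP b]$, and on $B^d$ to the corresponding restriction of $g(x) := f(a + b - x)$ (which is again order continuous and locally band preserving, and reverses the direction of the inequality), produces $c_1, c_2 \in [a,b]$ with $\bP f(c_1) = \bP y$ and $\bP^d f(c_2) = \bP^d y$. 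Setting $c := \bP c_1 + \bP^d c_2 \in [a,b]$, the locally band preserving property of $f$ gives $\bP f(c) = \bP f(c_1) = \bP y$ and $\bP^d f(c) = \bP^d f(c_2) = \bP^d y$, so $f(c) = y$.

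For the reduced case $f(a) \leq y \leq f(b)$, set $a_0 := a$, $b_0 := b$, and recursively define $m_n := \tfrac{1}{2}(a_n + b_n)$, $B_n := B_{f(m_n) \leq y}$ with projection $\bP_n$, and
\[
a_{n+1} := \bP_n m_n + \bP_n^d a_n, \qquad b_{n+1} := \bP_n b_n + \bP_n^d m_n.
\]
A direct computation shows $a_n \leq a_{n+1} \leq b_{n+1} \leq b_n$ and $b_n - a_n = 2^{-n}(b - a)$. The invariant $f(a_n) \leq y \leq f(b_n)$ is preserved by induction: by \Cref{p:bands_largest_characterization}, on $B_n$ we have $\bP_n f(m_n) \leq \bP_n y$, and on $B_n^d = B_{y < f(m_n)}$ (the band generated by $(f(m_n) - y)^+$) we have $\bP_n^d f(m_n) \geq \bP_n^d y$. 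The locally band preserving property then transfers these inequalities to $f(a_{n+1})$ and $f(b_{n+1})$ via the identities $\bP_n a_{n+1} = \bP_n m_n$, $\bP_n^d a_{n+1} = \bP_n^d a_n$, and the analogous pair for $b_{n+1}$.

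Finally, Dedekind completeness provides $a_n \uparrow c$ and $b_n \downarrow c'$ in $[a,b]$. Since $0 \leq c' - c \leq b_n - a_n = 2^{-n}(b - a) \downarrow 0$ by the Archimedean property, we get $c = c'$ and $a_n, b_n \to c$ in order. Order continuity of $f$ at $c$ yields $f(a_n), f(b_n) \to f(c)$, and passing to the limit in $f(a_n) \leq y \leq f(b_n)$ gives $f(c) = y$. The main obstacle is the reduction step: confirming that the two solutions on $B$ and $B^d$ can be consistently glued to produce an element of $[a,b]$ with the correct image under $f$, which relies crucially on the locally band preserving hypothesis. The bisection itself is then a straightforward translation of the classical argument.
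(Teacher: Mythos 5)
Your proof is correct, but it takes a genuinely different route from the paper's. The paper reduces to $y=0$ and $f(a)\le f(b)$, forms $S=\{x\in[a,b]\colon f(x)\le 0\}$, shows $S$ is upwards directed via the locally band preserving property, sets $c=\sup S$, and then runs a fairly delicate argument — choosing $\Delta=\{\frac1n(b-c)\}$, exhibiting the competitor $x=\bP_{-f(c)>\eps}(c+\delta)+\bP_{-f(c)>\eps}^d(c)\in S$, and observing that $\bP_{-f(c)>\eps}(\delta)$ would be a weak order unit of $B_{-f(c)>\eps}$ — to force $B_{-f(c)>\eps}=\{0\}$ for every $\eps$ and hence $f(c)\ge 0$. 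Your bisection replaces that entire core: the band $B_{f(m_n)\le y}$ decides, fibre-wise, which half-interval to keep, the locally band preserving property transfers the invariant $f(a_n)\le y\le f(b_n)$ across the glued points exactly as you describe, and the explicit rate $b_n-a_n=2^{-n}(b-a)$ plus the Archimedean property (automatic under Dedekind completeness) yields order convergence of both sequences to a common $c$, so you only need order continuity at $c$ along one sequence and the fact that order limits preserve non-strict inequalities. Both proofs perform essentially the same gluing over $B_{f(a)\le f(b)}$ and its complement to handle general $y$, and both lean on the locally band preserving hypothesis at exactly the points where trichotomy fails. What your approach buys is a shorter, more constructive argument that avoids the weak-order-unit bookkeeping and the directedness of $S$, and it works verbatim for $a\le b$ without any unit assumption on $b-a$; what the paper's approach buys is a supremum-of-a-set template that it reuses almost unchanged for the Boundedness and Extreme Value Theorems in the same section.
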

\begin{proof}
We first assume that $y = 0$ and $f(a) \leq f(b)$. Define 
\[
S \defeq \{x \in [a, b] \colon f(x) \leq 0\}. 
\]
Then $S$ is upwards directed since if $x, y \in S$ and $z = x \vee y$, we have that $z = \bP_{x \geq y}(x) + \bP_{x < y}(y)$ from which it follows that 
\[
f(z) = \bP_{x \geq y} (f(z)) + \bP_{x < y} (f(z)) = \bP_{x \geq y} (f(x)) + \bP_{x < y} (f(y)) \leq 0. 
\]
Thus, $S \uparrow c$ for some $c \in [a, b]$ and, by order continuity, $f(c) \leq 0$.

Consider $\Delta \defeq \{ \frac{1}{n} (b - c): n \in \bN \}$. There exists an $\Eps \searrow 0$ such that for every $\eps \in \Eps$ there exists a $\delta \in \Delta$ satisfying
\[ x \in [a, b] \text{ and } \abs{x - c} \leq \delta \implies \abs{f(x) - f(c)} \leq \eps. \]

Let $\eps \in \Eps$, and consider $\delta$ as above. Let $x := \bP_{-f(c) > \eps}(c + \delta) + \bP_{-f(c) > \eps}^d (c)$. Since $\bP_{-f(c) > \eps}^d (x) = \bP_{-f(c) > \eps}^d (c)$ and $f$ is locally band preserving, 
\[ \bP_{-f(c) > \eps}^d (f(x)) = \bP_{-f(c) > \eps} (f(c)) \leq 0. \]
Also, since $\abs{x - c} \leq \delta$,
\[ \bP_{-f(c) > \eps} (f(x) - f(c)) \leq \bP_{-f(c) > \eps}(\eps) \leq \bP_{-f(c) > \eps} (-f(c)), \]
so that $\bP_{-f(c) > \eps} (f(x)) \leq 0$. Therefore,
\[f(x) = \bP_{-f(c) > \eps}^d (f(x)) + \bP_{-f(c) > \eps} (f(x)) \le 0,\]
so that $x \in S$. Hence we obtain
\[ c + \bP_{-f(c) > \eps}(\delta) = x \leq c.\]
Combining this inequality with the fact that $\bP_{-f(c) > \eps}(\delta) \geq 0$, we get that $\bP_{-f(c) > \eps}(\delta) = 0$. Since
\[ \bP_{c = b} (-f(c)) = \bP_{c = b} (-f(b)) \leq 0 \leq \bP_{c = b} (\eps), \]
we have that $B_{c = b} \subseteq B_{-f(c) \leq \eps}$ by \Cref{p:bands_largest_characterization}(i), hence $B_{-f(c) > \eps} \subseteq B_{c < b}$. Thus, since $\delta = \frac{1}{n} (b - c)$ for some $n \in \bN$ and $\bP_{c < b}(b - c)$ is a weak order unit of $B_{c < b}$, we have that $\bP_{-f(c) > \eps}(\delta)$ is a weak order unit of $B_{-f(c) > \eps}$. Hence $B_{-f(c) > \eps} = \{0\}$, so that $-f(c) \leq \eps$. Since $\eps$ was arbitrary, $-f(c) \leq 0$. As we already knew that $f(c) \leq 0$, this means that $f(c) = 0 = y$.

The case where $f(b) \leq f(a)$ is similar. Once these two cases are established, we can find $c, d \in [a, b]$ for which we simultaneously have that $\bP_{f(a) \leq f(b)} (f(c)) = 0$ and $\bP_{f(a) > f(b)} (f(d)) = 0$. Since $f$ is locally band preserving, it then follows that
\[
f(\bP_{f(a) \leq f(b)}(c) + \bP_{f(a) > f(b)}(d)) = 0. 
\]
Finally, if $y \neq 0$, apply the result to $x \mapsto f(x) - y$.
\end{proof}

The following lemma will be used in the proof of the Extreme Value Theorem below. It is a vector lattice analogue of the $\eps$ characterisation of the supremum and infimum in $\bR$.

\begin{lemma} \label{l:wou_inf_sup_bands}
Suppose $E$ is a Dedekind complete real $\Phi$-algebra with $E \neq \{0\}$. Let $r \gg 0$, and let $A \subseteq E$ have infimum $\lambda$ (resp. supremum $\mu$). Then there exist an $a \in A$ such that $B_{a < \lambda + r} \neq \{0\}$ (resp. $B_{a > \mu - r} \neq \{0\}$).
\end{lemma}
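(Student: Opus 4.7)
The proof plan is a direct proof by contradiction, and it is quite short. I would argue the infimum case first, and then note that the supremum case is entirely symmetric.

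For the infimum version, suppose toward a contradiction that $B_{a < \lambda + r} = \{0\}$ for every $a \in A$. By definition, $B_{a < \lambda + r}$ is the band generated by $(\lambda + r - a)^+$, so the vanishing of this band forces $(\lambda + r - a)^+ = 0$, i.e.\ $a \geq \lambda + r$ for all $a \in A$. Consequently $\lambda + r$ is a lower bound of $A$, and since $\lambda = \inf A$ we obtain $\lambda + r \leq \lambda$, that is, $r \leq 0$. However $r$ is a weak order unit of $E$, hence in particular $r > 0$, and this contradicts $E \neq \{0\}$.

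For the supremum version, the exact same argument applied to $(a - (\mu - r))^+$ shows that if $B_{a > \mu - r} = \{0\}$ for all $a \in A$, then $a \leq \mu - r$ for all $a \in A$, so $\mu \leq \mu - r$ and hence $r \leq 0$, again a contradiction.

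There is no real obstacle here; the only point that needs to be handled carefully is the unpacking of the notation $B_{a < \lambda + r}$ (namely, that this band being trivial is equivalent to $(\lambda + r - a)^+ = 0$), and the fact that a weak order unit in a nonzero vector lattice is strictly positive, so $r \leq 0$ is genuinely a contradiction.
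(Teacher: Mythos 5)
Your proof is correct and follows essentially the same route as the paper: the triviality of $B_{a < \lambda + r}$ forces $(\lambda + r - a)^+ = 0$, hence $\lambda + r \leq a$ for all $a \in A$, making $\lambda + r$ a lower bound of $A$ and forcing $r \leq 0$, which is incompatible with $r$ being a weak order unit of a nonzero vector lattice. The symmetric treatment of the supremum case also matches the paper.
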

\begin{proof}
If $B_{a < \lambda + r} = \{0\}$, then $\lambda + r \leq a$. Thus, if this inequality holds for all $a \in A$, we have that $\lambda + r$ is a lower bound of $A$, and hence $r = 0$, contradicting that $E \neq \{0\}$. Showing that $B_{a > \mu - r} \neq \{0\}$ is similar.
\end{proof}

\begin{theorem}[Extreme Value Theorem] \label{t:EVT}
Let $E$ be a Dedekind complete real $\Phi$-algebra. Consider $a,b \in E$ such that $a \le b$. If $f \colon [a, b] \to E$ is order continuous and locally band preserving, then there exist $c, d \in [a, b]$ such that for all $x \in [a, b]$, $f(c) \leq f(x) \leq f(d)$.
\end{theorem}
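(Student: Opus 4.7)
The plan is to produce $d \in [a,b]$ attaining $\mu := \sup_{x \in [a,b]} f(x)$ (which exists by \Cref{t:BT}); the corresponding $c$ follows by applying the argument to $-f$. The key intermediate result is an \emph{approximation lemma}: for each $r \gg 0$ in $E$, there exists $y \in [a,b]$ with $\mu - f(y) \le r$.

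To prove the approximation lemma, I apply Zorn's lemma to
\[
\mathcal{N}_r := \{(B, y) : B \text{ band in } E,\ y \in [a,b],\ \bP_B(\mu - f(y)) \le \bP_B(r)\},
\]
ordered by $(B_1, y_1) \preceq (B_2, y_2)$ iff $B_1 \subseteq B_2$ and $\bP_{B_1}(y_1) = \bP_{B_1}(y_2)$. For a chain $((B_\alpha, y_\alpha))$, set $y''_\alpha := \bP_\alpha(y_\alpha) + \bP_\alpha^d(a)$; the compatibility condition forces $(y''_\alpha)$ to be increasing in $[a,b]$, so $y^{**} := \sup_\alpha y''_\alpha$ exists by Dedekind completeness, and local band preservation together with $\bP_\alpha(y^{**}) = \bP_\alpha(y_\alpha)$ gives $\bP_{B_\alpha}(\mu - f(y^{**})) \le \bP_{B_\alpha}(r)$ for every $\alpha$. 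Applying \Cref{p:zaanen_theorem_30_5_ii} to $(r - (\mu - f(y^{**})))^-$ upgrades this to the supremum band $B := \sup_\alpha B_\alpha$, so $(B, y^{**})$ is an upper bound in $\mathcal{N}_r$. At a Zorn-maximal $(B^{\#}, y^{\#})$, suppose $B^{\#} \ne E$; then $\bP_{(B^{\#})^d}(r)$ is a weak order unit of $(B^{\#})^d$, so \Cref{l:wou_inf_sup_bands} produces $z \in [a,b]$ and a nonzero band $B_2 \subseteq (B^{\#})^d$ with $\bP_{B_2}(\mu - f(z)) < \bP_{B_2}(r)$. The locally-band-preserving gluing $\bP_{B^{\#}}(y^{\#}) + \bP_{B^{\#}}^d(z)$ together with the band $B^{\#} \vee B_2$ strictly extends $(B^{\#}, y^{\#})$, a contradiction; hence $B^{\#} = E$ and $\mu - f(y^{\#}) \le r$.

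Since $e$ is a weak order unit of the $\Phi$-algebra $E$, applying the approximation lemma with $r = \tfrac{1}{n} e$ yields $y_n \in [a,b]$ with $\mu - f(y_n) \le \tfrac{1}{n} e$ for every $n \in \bN$. Define $s_n := \bigvee_{k \ge n} y_k$, which exists in $[a,b]$ by Dedekind completeness and satisfies $s_n \downarrow d$ for some $d \in [a,b]$. The key estimate is $\mu - f(s_n) \le \tfrac{1}{n} e$: for the finite truncation $s_n^{(N)} := y_n \vee \cdots \vee y_N$, iterated band decomposition produces pairwise disjoint bands $E = \bigoplus_{k=n}^N B_k^{(N)}$ on which $s_n^{(N)}$ agrees with the corresponding $y_k$, so local band preservation gives
\[
\mu - f(s_n^{(N)}) = \sum_{k=n}^N \bP_{B_k^{(N)}}(\mu - f(y_k)) \le \sum_{k=n}^N \bP_{B_k^{(N)}}\!\left(\tfrac{1}{n} e\right) = \tfrac{1}{n} e.
\]
Passing $N \to \infty$ via order continuity of $f$ applied to $s_n^{(N)} \uparrow s_n$, and using that the positive cone is order closed, gives $\mu - f(s_n) \le \tfrac{1}{n} e$. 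A second appeal to order continuity on $s_n \downarrow d$ yields $\mu - f(d) \le \tfrac{1}{n} e$ for every $n$, and the Archimedean property implicit in Dedekind completeness forces $f(d) = \mu$.

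The main obstacle is the approximation lemma: the interlocking of chain compatibility, local band preservation, and \Cref{p:zaanen_theorem_30_5_ii} is needed both to verify that chains in $\mathcal{N}_r$ admit upper bounds and to close the gap via \Cref{l:wou_inf_sup_bands} at a non-trivial Zorn-maximal element.
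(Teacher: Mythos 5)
Your proposal is correct, and it takes a genuinely different route from the paper. The paper argues via the running maximum $M(x) = \sup_{y \in [a,x]} f(y)$: it takes $d$ to be the supremum of the set of $x$ for which $M(b) - M(x)$ is a weak order unit, and derives a contradiction from $f(d) \ll M(b)$ through a delicate two-band case analysis (splitting along $B_{d < b}$ and $B_{d = b}$), before patching the general case together along $B_{f(a) < M(b)}$ and its complement. You instead target the global supremum $\mu$ directly and split the work into two cleanly separated steps: an approximation lemma producing, for every weak order unit $r$, a point $y$ with $\mu - f(y) \le r$ (proved by a Zorn exhaustion over pairs $(B, y)$, with \Cref{p:zaanen_theorem_30_5_ii} handling chains and \Cref{l:wou_inf_sup_bands} closing the gap at a maximal element); and a limiting step that converts the $\tfrac{1}{n}e$-near-maximizers $y_n$ into an exact maximizer via $s_n = \bigvee_{k \ge n} y_k$, using that a finite supremum of arguments decomposes $E$ into bands on which $f$ can be evaluated by local band preservation, together with two applications of order continuity and order-closedness of the positive cone. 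Your route is closer in spirit to the classical ``maximizing sequence'' proof, with local band preservation playing the role of compactness, and it avoids the paper's intertwined case analysis at the cost of invoking Zorn's lemma (the paper gets by on Dedekind completeness alone). Two small points to tidy up: $\preceq$ as you define it is only a preorder (antisymmetry fails off $B_1$), so either normalize the second coordinate by insisting $\bP_B^d(y) = \bP_B^d(a)$ or invoke the preorder form of Zorn's lemma --- the contradiction at a maximal element still goes through since the extension enlarges the band; and you should note explicitly that $(\{0\}, a) \in \mathcal{N}_r$ so the family is nonempty, and that band projections preserve arbitrary suprema so that $\bP_{(B^{\#})^d}(\mu)$ really is the supremum of $\bP_{(B^{\#})^d}(f([a,b]))$ when you apply \Cref{l:wou_inf_sup_bands} inside $(B^{\#})^d$.
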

\begin{proof}

Assume $E \neq \{0\}$, noting that the result is trivial otherwise. We will first show the existence of $d$. By \Cref{t:BT}, $M(x) \defeq \sup_{y \in [a, x]} f(y)$ exists for all $x \in [a, b]$.

Assume $f(a) \ll M(b)$. Since 
\[
\bP_{a = b}(M(b) - f(a)) = \bP_{a = b} (M(a) - f(a)) = \bP_{a = b} (f(a) - f(a)) =  0, 
\]
$B_{a = b} = \{0\}$, and hence $E = B_{a < b}$. By \Cref{p:bands_largest_characterization}(iii), $a \ll b$. Let 
\[
S := \{x \in [a, b] \colon M(b) - M(x) \text{ is a weak order unit} \}. 
\]
Then $d \defeq \sup S \in [a, b]$. Assume further that $f(d) \ll M(b)$.

Consider $\Delta \defeq \{ \frac{1}{n} (b - a) \colon n \in \bN\}$. Since $f$ is order continuous at $d$, there exists an $\Eps \searrow 0$ such that for every $\eps \in \Eps$ there exists a $\delta \in \Delta$ satisfying
\[ x \in [a, b] \text{ and } \abs{x - d} \leq \delta \implies \abs{f(x) - f(d)} \leq \eps. \]

By \Cref{l:wou_inf_sup_bands}, there exists an $\eps \in \Eps$ (with corresponding $\delta$) such that $B_{\eps < \frac12( M(b) - f(d))} \neq \{0\}$. If we denote the corresponding band projection by $\bP_\eps$, then the Dedekind complete real $\Phi$-algebra $B_{\eps < \frac12( M(b) - f(d))}$ contains $\bP_\eps(S)$ and $\bP_\eps(\delta)$, which is a weak order unit of $B_{\eps < \frac12( M(b) - f(d))}$ since $\delta = \frac{1}{n} (b-a)$ is a weak order unit of $E$. Thus, it follows from \Cref{l:wou_inf_sup_bands} that there exists an $s \in S$ satisfying
\[
B_{\eps < \frac12( M(b) - f(d))} \cap B_{s > d - \delta} = \bP_\eps(B_{s > d - \delta}) = B_{\bP_\eps(s) > \bP_\eps(d)-\bP_\eps(\delta)}\neq \{0\}. 
\]
Now we will obtain a contradiction by showing that
\begin{itemize}
\item[$(i)$] $B_1 \defeq B_{\eps < \frac12 ( M(b) - f(d))} \cap B_{s > d - \delta} \cap B_{d < b} = \{0\}$, \text{ and} 
\item[$(ii)$] $B_2 \defeq B_{\eps < \frac12 (M(b) - f(d))} \cap B_{s > d - \delta} \cap B_{d = b} = \{0\}$.
\end{itemize}
These statements combined yield a contradiction, since 
\[
B_1 + B_2 = B_{\eps < M(b) - f(d)} \cap B_{s > d - \delta} \neq \{0\}.
\]
We first show (i). To this end, let $x \in [a, (\bP_1 (d + \delta) + \bP_1^d(s)) \wedge b]$, then
\[ \bP_1 \bP_{x \leq d - \delta} (a) \leq \bP_1 \bP_{x \leq d - \delta} (x) \leq \bP_1 \bP_{x \leq d - \delta} (d - \delta) \leq \bP_1 \bP_{x \leq d - \delta} (s). \]
Thus, by \Cref{l:restricting_to_bands}(i), we have that
\[ \bP_1 \bP_{x \leq d - \delta} (f(x)) \leq \bP_1 \bP_{x \leq d - \delta} (M(s)) = \bP_1 \bP_{x \leq d - \delta} (M(b) - (M(b) - M(s))). \]
Since $\bP_{x > d - \delta} (d - \delta) \leq \bP_{x > d - \delta}(x)$ and $\bP_1 (x) \leq \bP_1( d + \delta)$, we have that
\[ \bP_1 \bP_{x > d - \delta} (d - \delta) \leq \bP_1 \bP_{x > d - \delta} (x) \leq \bP_1 \bP_{x > d - \delta} (d + \delta), \]
and so $\bP_1 \bP_{x > d - \delta} \left( \abs{x - d} \right) \leq \delta$. By \Cref{l:restricting_to_bands}(iii), 
\[
\bP_1 \bP_{x > d - \delta} \left( \abs{f(x) - f(d)} \right) \leq \bP_1 \bP_{x > d - \delta}(\eps) 
\]
and so
\begin{align*}
\bP_1 \bP_{x > d - \delta} (f(x))
& \leq \bP_1 \bP_{x > d - \delta} (f(d) + \eps) \\
& \leq \bP_1 \bP_{x > d - \delta} (f(d) + \tfrac12 (M(b) - f(d)) \\
& = \bP_1 \bP_{x > d - \delta} \left(M(b) - \tfrac12 (M(b) - f(d))\right). 
\end{align*}
Since $\bP_1^d(x) \in \bP_1^d ([a, s])$ and $f$ is locally band preserving,
\[ \bP_1^d (f(x)) \leq \bP_1^d (M(s)) = \bP_1^d (M(b) - (M(b) - M(s))) \]
holds by \Cref{l:restricting_to_bands}(i). Therefore,
\begin{align*}
f(x)
& \leq \left( \bP_1 \bP_{x \leq d - \delta} + \bP_1^d \right) (M(b) - (M(b) - M(s))) \\ 
& \hskip .8cm + \bP_1 \bP_{x > d - \delta} \left(M(b) - \tfrac12 (M(b) - f(d))\right) \\
& \leq M(b) - (M(b) - M(s)) \wedge \tfrac12 (M(b) - f(d)).
\end{align*}
By taking the supremum over all $x \in [a, (\bP_1 (d + \delta) + \bP_1^d(s)) \wedge b]$, we have that
\[ M(b) - M((\bP_1 (d + \delta) + \bP_1^d(s)) \wedge b) \geq (M(b) - M(s)) \wedge \tfrac12 (M(b) - f(d)). \]
Therefore, $M(b) - M((\bP_1 (d + \delta) + \bP_1^d(s)) \wedge b)$ is a weak order unit, so $(\bP_1 (d + \delta) + \bP_1^d(s)) \wedge b \in S$. Then we have $(\bP_1 (d + \delta) + \bP_1^d(s)) \wedge b \leq d$, hence 
\[
\bP_1(d) \le (\bP_1(d) + \bP_1(\delta)) \wedge \bP_1(b) \le \bP_1(d).
\]
Thus $\bP_1(d) = (\bP_1(d) + \bP_1(\delta)) \wedge \bP_1(b)$, so

\[
\bP_1(d) = \bP_{b=d}\bP_1(b) + \bP_{d<b}\bP_1(d+\delta) = \bP_1(d+\delta),
\]
as $B_1 \subseteq B_{d < b} = B_{b=d}^d$. Hence $\bP_1(\delta) = 0$, showing that $B_1 = \{0\}$.

Now we show (ii). To this end, let $x \in [a, b]$. We have that
\[ \bP_2 \bP_{x \leq d - \delta} (a) \leq \bP_2 \bP_{x \leq d - \delta} (x) \leq \bP_2 \bP_{x \leq d - \delta} (d - \delta) \leq \bP_2 \bP_{x \leq d - \delta} (s). \]
By \Cref{l:restricting_to_bands}(i),
\[  \bP_2 \bP_{x \leq d - \delta} (f(x)) \leq  \bP_2 \bP_{x \leq d - \delta} (M(s)) =  \bP_2 \bP_{x \leq d - \delta} ( M(b) - (M(b) - M(s)) ). \]
Since
\[ \bP_2 \bP_{x > d - \delta} (d - \delta) \leq \bP_2 \bP_{x > d - \delta} (x) \leq \bP_2 \bP_{x > d - \delta} (b) = \bP_2 \bP_{x > d - \delta} (d), \]
we have that $\bP_2 \bP_{x > d - \delta} \left( \abs{x - d} \right) \leq \delta$, so we can apply \Cref{l:restricting_to_bands}(iii) to get that
\begin{align*}
\bP_2 \bP_{x > d - \delta} (f(x))
& \leq \bP_2 \bP_{x > d - \delta} (f(d) + \eps) \\
& \leq \bP_2 \bP_{x > d - \delta} (f(d) + \tfrac12 (M(b) - f(d))) \\
& = \bP_2 \bP_{x > d - \delta} \left(M(b) - \tfrac12 (M(b) - f(d))\right).
\end{align*}
Therefore,
\begin{align*}
&\bP_2 (f(x))= \bP_2 \bP_{x \leq d - \delta} (f(x)) + \bP_2 \bP_{x > d - \delta} (f(x)) \\
& \leq \bP_2 \bP_{x \leq d - \delta} ( M(b) - (M(b) - M(s)) ) + \bP_2 \bP_{x > d - \delta} \left(M(b) - \tfrac12 (M(b) - f(d))\right)  \\
& \leq \bP_2 (M(b) - (M(b) - M(s)) \wedge \tfrac12 (M(b) - f(d))).
\end{align*}
By taking the supremum over all $x \in [a, b]$, we have that 
\[
\bP_2 (M(b)) \leq \bP_2 (M(b) - (M(b) - M(s)) \wedge \tfrac12 (M(b) - f(d))).
\]
This inequality shows that $\bP_2 ((M(b) - M(s)) \wedge \frac12 (M(b) - f(d))) = 0$. Now 
\[
(M(b) - M(s)) \wedge \tfrac12 (M(b) - f(d))
\]
is a weak order unit, so $B_2 = \{0\}$.

Therefore, the assumptions $f(a) \ll M(b)$ and $f(d) \ll M(b)$ lead to a contradiction. If $B \defeq B_{f(a) < M(b)} \cap B_{f(d) < M(b)} \neq \{0\}$, we can restrict ourselves to $B$ by considering the function $g\colon \bP([a, b]) \to B$ given by 
\[
g(x) := \bP (f(x + \bP^d(a))). 
\]
Since $g(\bP(a)) \ll \bP (M(b))$ and, using that $f$ is locally band preserving, $g(\bP(d)) = \bP(f(d)) \ll \bP(M(b))$ in $B$, the above argument yields a contradiction. Hence, it must be that $B  = \{0\}$, and so we obtain the inclusion $B_{f(a) < M(b)} \subseteq B_{f(d) = M(b)}$, which means $\bP_{f(a) < M(b)} (f(d)) = \bP_{f(a) < M(b)} (M(b))$. Let $d_0 := \bP_{f(a) = M(b)} (a) + \bP_{f(a) < M(b)} (d)$. Since $f$ is locally band preserving,
\begin{align*}
f(d_0)
& = \bP_{f(a) = M(b)} (f(d_0)) + \bP_{f(a) < M(b)} (f(d_0)) \\
& = \bP_{f(a) = M(b)} (f(a)) + \bP_{f(a) < M(b)} (f(d)) \\
& = \bP_{f(a) = M(b)} (M(b)) + \bP_{f(a) < M(b)} (M(b)) \\
& = M(b).
\end{align*}
Hence for all $x \in [a,b]$ we have shown that $f(x) \le f(d_0)$. The existence of $c$ follows by considering $-f$.
\end{proof}
We now prove an analogue of Rolle's Theorem and the Mean Value Theorem for super order differentiable functions. Note that by \Cref{t:differentiable_implies_lbp} all such functions are automatically locally band preserving. 

\begin{theorem}[Rolle's Theorem] \label{t:RT}
Let $E$ be a Dedekind complete real $\Phi$-algebra. Consider $a,b\in E$ with $a\ll b$. Let $f\colon [a, b] \to E$ be order continuous on $[a, b]$ and super order differentiable on $(a, b)$. If $f(a) = f(b)$, then there exists $x_0 \in (a, b)$ with $f'(x_0) = 0$.
\end{theorem}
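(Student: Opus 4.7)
The plan is to combine the Extreme Value Theorem with a band decomposition to locate a single point $x_0 \in (a,b)$ at which the derivative vanishes. The hypothesis $a \ll b$ lets us write $[a,b] = \barN(\tfrac{a+b}{2}, \tfrac{b-a}{2})$, so Corollary \ref{l:differentiable_implies_lbp} shows that $f$ is locally band preserving on $[a,b]$, and Theorem \ref{t:EVT} produces $c, d \in [a,b]$ with $f(c) \le f(x) \le f(d)$ for every $x \in [a,b]$. Classically one would take $x_0$ to be whichever of $c$ or $d$ lies strictly between $a$ and $b$; here the interior/boundary dichotomy must be made band by band. I would therefore introduce the three pairwise orthogonal bands
\[
B_M := B_{a<d} \cap B_{d<b}, \qquad B_{m'} := B_M^d \cap B_{a<c} \cap B_{c<b}, \qquad B_F := (B_M + B_{m'})^d,
\]
whose direct sum is $E$, and set $x_0 := \bP_M(d) + \bP_{m'}(c) + \bP_F\bigl(\tfrac{a+b}{2}\bigr)$. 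Since $d - a$ generates $B_{a<d} \supseteq B_M$, the component $\bP_M(d-a)$ is a weak order unit of $B_M$; likewise $\bP_{m'}(c-a)$ in $B_{m'}$ and $\bP_F(\tfrac{b-a}{2})$ in $B_F$. Proposition \ref{p:properties_bands}(viii) then gives $a \ll x_0$, and $x_0 \ll b$ follows symmetrically, so $f$ is super order differentiable at $x_0$.

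To conclude $f'(x_0) = 0$ it suffices to verify $\bP(f'(x_0)) = 0$ for each $\bP \in \{\bP_M, \bP_{m'}, \bP_F\}$. Setting $s := (x_0 - a) \wedge (b - x_0) \gg 0$, I take $\Delta := \{\tfrac{1}{n} s : n \in \bN\}$ and let $\Eps \searrow 0$ be the corresponding set from the definition of super order differentiability at $x_0$. For each $\eps \in \Eps$ with associated $\delta = \tfrac{1}{n}s$, both $z_\pm := x_0 \pm \delta$ lie in $[a,b]$, so
\[
|f(z_\pm) - f(x_0) \mp \delta f'(x_0)| \le \delta \eps.
\]
By construction $\bP_M(x_0) = \bP_M(d)$ and $\bP_{m'}(x_0) = \bP_{m'}(c)$, so locally band preserving gives $\bP_M(f(x_0)) = \bP_M(f(d))$ and $\bP_{m'}(f(x_0)) = \bP_{m'}(f(c))$; together with $f(c) \le f(z_\pm) \le f(d)$ this forces $\bP_M(f(z_\pm) - f(x_0)) \le 0$ and $\bP_{m'}(f(z_\pm) - f(x_0)) \ge 0$. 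Combining the two one-sided estimates collapses to $|\bP(\delta f'(x_0))| \le \bP(\delta \eps)$ for $\bP \in \{\bP_M, \bP_{m'}\}$.

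For the $B_F$ component the hypothesis $f(a) = f(b)$ finally becomes essential. Since $B_F \subseteq B_M^d = B_{a=d} \oplus B_{d=b}$, the locally band preserving property applied on each summand yields $\bP_F \bP_{a=d}(f(d)) = \bP_F \bP_{a=d}(f(a))$ and $\bP_F \bP_{d=b}(f(d)) = \bP_F \bP_{d=b}(f(b)) = \bP_F \bP_{d=b}(f(a))$, hence $\bP_F(f(d)) = \bP_F(f(a))$; symmetrically $\bP_F(f(c)) = \bP_F(f(a))$. The sandwich $f(c) \le f(z) \le f(d)$ then forces $\bP_F(f(z)) = \bP_F(f(a))$ for every $z \in [a,b]$, so $\bP_F(f(z_\pm) - f(x_0)) = 0$ and the same manipulation produces $|\bP_F(\delta f'(x_0))| \le \bP_F(\delta \eps)$. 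In all three cases one ends up with $\bP(\delta)\bP(|f'(x_0)|) \le \bP(\delta)\bP(\eps)$, and the main technical obstacle is cancelling the weak order unit $\bP(\delta)$, which need not be invertible in $\bP(E)$; this is handled exactly as in the uniqueness argument of Proposition \ref{p:derivative_unique} by restricting to sub-bands on which $\bP(\delta)$ is invertible and exhausting via Lemma \ref{l:projection_weak_unit_invertible}. Since $\inf \bP(\Eps) = 0$, letting $\eps$ range over $\Eps$ forces $\bP(f'(x_0)) = 0$ in each band, completing the proof.
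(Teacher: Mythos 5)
Your proposal is correct and follows essentially the same route as the paper's proof: the same three-band decomposition $B_{a<d}\cap B_{d<b}$, its complement intersected with $B_{a<c}\cap B_{c<b}$, and the residual band, the same choice of $x_0$ as the corresponding mixture of $d$, $c$, and $\tfrac12(a+b)$, the same one-sided difference-quotient argument on the extremal bands, and the same exhaustion by sub-bands on which the weak order unit becomes invertible. The only cosmetic differences are that you use the symmetric points $x_0\pm\delta$ with $\delta$ a multiple of $(x_0-a)\wedge(b-x_0)$ and combine the two one-sided bounds into a single modulus estimate, whereas the paper runs two separate sequences $y_n$ and $z_n$ built from the $r$ supplied by the definition of super order differentiability.
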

\begin{proof}
By \Cref{t:EVT}, there exist $c, d \in [a, b]$ such that $f(c) \leq f(x) \leq f(d)$ holds for all $x \in [a, b]$. Then $E = B_0 \oplus B_1 \oplus B_2$, where
\begin{align*}
B_0 & \defeq B_{a < d} \cap B_{d < b}, \\
B_1 & \defeq B_0^d \cap B_{a < c} \cap B_{c < b}, \text{ and} \\
B_2 & \defeq B_0^d \cap B_1^d.
\end{align*}
It follows that
\begin{align*}
B_2
& = (B_{a < d} \cap B_{d < b})^d \cap (B_0^d \cap B_{a < c} \cap B_{c < b})^d \\
& = (B_{a = d} + B_{d = b}) \cap (B_0 + B_{a = c} + B_{c = b}) \\
& = (B_{a = d} \cap B_{a = c}) + (B_{a = d} \cap B_{c = b}) + (B_{d = b} \cap B_{a = c}) + (B_{d = b} \cap B_{c = b}) \\
& = (B_{a = d} + B_{d = b}) \cap (B_{a = c} + B_{c = b}).
\end{align*}

Let $x_0 \defeq \bP_0(d) + \bP_1(c) + \bP_2(\frac12(a+b))$, and note that $x_0 \in (a, b)$. Since 
$\bP_{a = d} (f(d)) = \bP_{a = d} (f(a))$ and 
\[
\bP_{d = b} (f(d)) = \bP_{d = b} (f(b)) = \bP_{d = b} (f(a)), 
\]
we have that $\bP_2 (f(d)) = \bP_2 (f(a))$. Similarly, $\bP_2 (f(c)) = \bP_2 (f(a))$. Thus $\bP_2 \circ f$ is constant, so $\bP_2 (f'(x_0)) = 0$. We will show that $\bP_0 (f'(x_0)) = 0$. The argument for why $\bP_1 (f'(x_0)) = 0$ is similar as in both cases $x_0$ equals an extremum on the band.

Since $f$ is super order differentiable at $x_0$, there exists an $r \gg 0$ such that $(x_0 - r, x_0 + r) \subseteq [a, b]$ and there exists an $\Eps \searrow 0$ such that for every $\eps \in \Eps$ there exists an $N_\eps \in \bN$ satisfying
\[ y \in [a, b] \text{ and } \abs{y - x_0} \leq \tfrac{1}{N_\eps} r \implies \abs{f(y) - f(x_0) - (y - x_0)f'(x_0)} \leq \abs{y - x_0} \eps. \]

For $n \in \bN$, define $y_n := x_0 + \frac{1}{n+1} r$. Then $y_n \in [a, b]$ for all $n \in \bN$.
Let $m \in \bN$. Since $\frac{1}{m}e$ is invertible, we have that $\bP(r)$ has an inverse in $B \defeq B_{\frac{1}{m}e \leq r}$ by \cite[Theorem~11.1]{depagter}. Denote this inverse by $s$. Then, for any $\eps \in \Eps$ and $n \geq N_\eps$, $\abs{y_n - x_0} \leq \frac{1}{N_\eps} r$ so
\[ \abs*{f(y_n) - f(x_0) - \tfrac{1}{n} r f'(x_0) } \leq \abs*{y_n - x_0} \eps \leq \tfrac{1}{n+1} r \eps, \]
and hence, by applying the projection $\bP_0 \bP$ and multiplying by $(n+1)s$ on both sides,
\[ \abs*{\bP_0 \bP (f(y_n) - f(x_0)) (n + 1)s - \bP_0 \bP(f'(x_0))} \leq \bP_0 \bP(\eps). \]
Thus, $\bP_0 \bP (f(y_n) - f(x_0)) (n + 1)s \to \bP_0 \bP (f'(x_0))$. For all $n \in \bN$, since we have $\bP_0 (x_0) = \bP_0 (d)$, and $f$ is locally band preserving,
\[ \bP_0 \bP (f(y_n) - f(x_0)) (n + 1)s = \bP_0 \bP (f(y_n) - f(d)) (n + 1)s \leq 0. \]
Thus, we have $\bP_0 \bP (f'(x_0)) \leq 0$. Replacing $y_n$ by $z_n := x_0 - \frac{1}{n+1} r$ in the argument above shows that $\bP_0 \bP (f'(x_0)) \geq 0$. Therefore, $\bP_0 \bP (f'(x_0)) = 0$. Since $m$ was arbitrary, by \Cref{l:projection_weak_unit_invertible}, we have that $\bP_0 (f'(x_0)) = 0$. 
\end{proof}

\begin{theorem}[Mean Value Theorem] \label{t:MVT}
Let $E$ be a Dedekind complete real $\Phi$-algebra. Consider $a,b\in E$ with $a\ll b$. Let $f\colon [a, b] \to E$ be order continuous on $[a, b]$ and super order differentiable on $(a, b)$. Then there exists an $x_0 \in (a, b)$ such that
\[ (b - a) f'(x_0) = f(b) - f(a). \]
\end{theorem}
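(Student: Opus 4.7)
The plan is to mimic the classical argument by constructing an auxiliary function to which Rolle's Theorem (\Cref{t:RT}) applies. The only subtle point is that $b-a$, being merely a weak order unit, need not be invertible; hence one cannot just subtract the ``slope'' $\frac{f(b)-f(a)}{b-a}$ times $(x-a)$ from $f$. Instead I would multiply through and avoid any division.

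First I would define
\[
g\colon [a,b]\to E, \qquad g(x) \defeq (b-a)\,f(x) - (f(b)-f(a))(x-a).
\]
A direct computation gives $g(a) = (b-a)f(a)$ and
\[
g(b) = (b-a)f(b) - (f(b)-f(a))(b-a) = (b-a)f(a),
\]
so $g(a) = g(b)$. Moreover, $g$ is a sum and product of order continuous (respectively super order differentiable) functions built from $f$ and polynomial expressions in $x$, so \Cref{p:properties_derivative} yields that $g$ is order continuous on $[a,b]$ and super order differentiable on $(a,b)$, with
\[
g'(x) = (b-a)\,f'(x) - (f(b)-f(a)).
\]

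Next I would apply Rolle's Theorem (\Cref{t:RT}) to $g$, which furnishes an $x_0 \in (a,b)$ with $g'(x_0)=0$. Rearranging gives
\[
(b-a)\,f'(x_0) = f(b) - f(a),
\]
which is the desired conclusion. There is no genuine obstacle here once Rolle's Theorem is in hand: the key design choice is to form the auxiliary function $g$ with the factor $(b-a)$ built in multiplicatively, so that the proof never requires inverting a weak order unit.
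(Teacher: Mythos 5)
Your proposal is correct and follows essentially the same route as the paper: the paper uses the auxiliary function $g(x) := (b-a)f(x) - (f(b)-f(a))x$, which differs from yours only by the additive constant $(f(b)-f(a))a$, and then applies Rolle's Theorem exactly as you do. The verification that $g(a)=g(b)$, the derivative computation via the sum and product rules, and the final rearrangement all match.
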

\begin{proof}
Define $g(x) \defeq (b - a) f(x) - (f(b) - f(a)) x$. Then $g$ is order continuous on $[a, b]$ and super order differentiable on $(a, b)$ since $f$ and $x \mapsto x$ are. Note that $x \mapsto x$ has derivative $e$ everywhere. Also,
\[ g(a) = (b - a) f(a) - (f(b) - f(a)) a  = (b - a) f(b) - (f(b) - f(a)) b = g(b).  \]
By \Cref{t:RT}, there exists an $x_0 \in (a, b)$ such that
\[ 0 = g'(x_0) = (b - a)f'(x_0) - (f(b) - f(a))e.\]
Therefore, $(b - a) f'(x_0) = f(b) - f(a)$.
\end{proof}

\begin{corollary} \label{c:MVT_cor}
Let $E$ be a Dedekind complete real $\Phi$-algebra. Consider $a,b\in E$ with $a\ll b$. Let $f\colon (a,b)\to E$ be super order differentiable. For any $c,d\in(a,b)$, there exists $x_0\in\{c+x(d-c)\colon x\in(0,e)\}$ such that
\[
(d-c)f'(x_0)=f(d)-f(c).
\]
\end{corollary}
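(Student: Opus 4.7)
The plan is to reduce the claim to \Cref{t:MVT} via the affine reparametrisation $\varphi(t) := c + t(d-c)$. First, I would verify that $\varphi([0, e]) \subseteq (a, b)$ so that $g := f \circ \varphi$ is well defined on $[0, e]$. Since $c, d \in (a, b)$, both $c - a$ and $d - a$ are weak order units, and for any $t \in [0, e]$,
\[
\varphi(t) - a = (e-t)(c-a) + t(d-a) \geq (c-a) \wedge (d-a).
\]
The right-hand side is a weak order unit by \Cref{p:properties_bands}(vi), so \Cref{p:properties_bands}(vii) gives $a \ll \varphi(t)$. A symmetric estimate starting from $b - \varphi(t) = (e-t)(b-c) + t(b-d)$ shows $\varphi(t) \ll b$.

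Next I would establish that $g$ is order continuous on $[0, e]$ and super order differentiable on $(0, e)$ with $g'(t) = (d-c)\, f'(\varphi(t))$. The map $\varphi$ is a polynomial in $t$, hence super order differentiable everywhere with $\varphi'(t) = d - c$; so these properties of $g$ follow from the order continuity implied by super order differentiability together with the chain rule for super order differentiable functions referenced from \cite{differentiation}. For the super order differentiability of $g$ at $t_0 \in (0, e)$, the required weak order unit $r \gg 0$ with $N(t_0, r) \subseteq [0, e]$ can be taken to be $r := t_0 \wedge (e - t_0)$, which is a weak order unit by \Cref{p:properties_bands}(vi).

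Finally, since $0 \ll e$, applying \Cref{t:MVT} to $g$ on $[0, e]$ produces a $t_0 \in (0, e)$ satisfying $e \cdot g'(t_0) = g(e) - g(0)$, that is,
\[
(d-c)\, f'(x_0) = f(d) - f(c), \qquad x_0 := \varphi(t_0) = c + t_0(d-c),
\]
which is the required element of $\{c + x(d-c) \colon x \in (0, e)\}$. The only delicate point is the first step: verifying that the order line segment $\varphi([0, e])$ remains inside the order-open interval $(a, b)$. This rests on the fact, encoded in \Cref{p:properties_bands}(vi)--(vii), that the meet of two weak order units remains a weak order unit and that anything above a weak order unit is again a weak order unit.
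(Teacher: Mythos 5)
Your argument is correct and follows essentially the same route as the paper's: reparametrise the segment by $t \mapsto c + t(d-c)$ on $[0,e]$ and apply \Cref{t:MVT} to $g = f\circ\varphi$. The only differences are minor: you invoke the chain rule from \cite{differentiation} where the paper carries out the $\epsilon$-$\delta$ estimate for $g$ directly, and you supply an explicit (and correct) verification that $\varphi([0,e]) \subseteq (a,b)$, a point the paper leaves implicit.
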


\begin{proof}
Define $g \colon [0, e] \to E$ by $g(x) := f(c_x)$, where $c_x \defeq c + x(d-c)$. It is easily verified that $g$ is order continuous on $[0, e]$, and we will show that $g$ is super order differentiable on $(0, e)$ with $g'(x) = (d-c) f'(c_x)$. To that end, let $\Delta \downarrow 0$ and $x \in (0, e)$. Since $f$ is super order differentiable at $c_x$, there exists an $\Eps \searrow 0$ such that for every $\eps \in \Eps$ there exists a $\delta \in \Delta$ such that for $y \in (a,b)$ the implication
\begin{align*}
\abs{y - c_x} \leq \abs{d - c} \delta \implies \abs*{f(y) - f(c_x) - (y - c_x) f'(c_x)} \leq \abs{y - c_x} \eps 
\end{align*}
holds. Let $y \in [0, e]$ be such that $\abs{y - x} \leq \delta$. Then $c_y \in (a,b)$ and 
\[
\abs{c_y - c_x} = \abs{d - c} \abs{y - x} \leq \abs{d-c} \delta,
\]
so that
\begin{align*}
&\abs*{g(y) - g(x) - (y - x) (d-c)f'(c_x)} \\
& \hskip 2.15cm = \abs*{f(c_y) - f(c_x) - (y - x) (d - c) f'(c_x)} \\
& \hskip 2.15cm = \abs*{f(c_y) - f(c_x) - (c_y - c_x)f'(c_x)} \\
& \hskip 2.15cm \leq \abs{c_y - c_x} \eps \\
& \hskip 2.15cm = \abs{x - y} \abs{d - c} \eps.
\end{align*}
Therefore, $g$ is super order differentiable on $(0, e)$. Now we apply \Cref{t:MVT} to obtain a $s \in (0, e)$ such that
\[(d-c) f'(c_s) = g'(s) = g(e) - g(0) = f(d) - f(c). \]
Hence, we may choose $x_0 \defeq c_s$.
\end{proof}

\begin{corollary} \label{c:zero_derivative_constant}
Let $E$ be a Dedekind complete real $\Phi$-algebra. Consider $a,b\in E$ with $a\ll b$. Let $f\colon (a, b) \to E$ be super order differentiable with $f' = 0$. Then $f$ is constant.
\end{corollary}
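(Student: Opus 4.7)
The plan is to apply \Cref{c:MVT_cor} directly and read off the conclusion. Fix arbitrary $c, d \in (a, b)$. By \Cref{c:MVT_cor}, there is some $x_0$ of the form $x_0 = c + s(d-c)$ with $s \in (0, e)$---in particular $x_0 \in (a, b)$, so that $f'(x_0)$ is defined and equal to $0$ by hypothesis---satisfying
\[
(d - c) f'(x_0) = f(d) - f(c).
\]
The left-hand side is zero, so $f(c) = f(d)$. Since $c, d$ were arbitrary points of $(a, b)$, this shows that $f$ is constant on $(a, b)$.

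There is no real obstacle: all of the technical work has already been done in establishing \Cref{c:MVT_cor}. The key point that makes the final step immediate is that \Cref{c:MVT_cor} was deliberately phrased for \emph{arbitrary} pairs $c, d \in (a, b)$, rather than only for pairs with $c \ll d$. Had we instead started from the form of the Mean Value Theorem in \Cref{t:MVT} (which requires $c \ll d$), then to compare $f$ at two arbitrary points $c$ and $d$ we would first have to pass to the band decomposition $E = B_{c < d} \oplus B_{d < c} \oplus B_{c = d}$ provided by \Cref{p:bands_decomposition}, apply \Cref{t:MVT} after restricting to each of the first two bands (using that $f$ is locally band preserving by \Cref{t:differentiable_implies_lbp}), and then reassemble. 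This band-splitting is precisely what is internalised in the proof of \Cref{c:MVT_cor}, so at this stage the corollary is a one-line deduction.
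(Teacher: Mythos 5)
Your proof is correct and coincides with the paper's own argument: both apply \Cref{c:MVT_cor} to an arbitrary pair $c,d\in(a,b)$ and conclude $f(d)-f(c)=(d-c)f'(x_0)=0$. The extra remarks about why \Cref{c:MVT_cor} (rather than \Cref{t:MVT}) is the right tool are accurate but not needed for the deduction.
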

\begin{proof}
For any $c, d \in (a, b)$, by applying \Cref{c:MVT_cor}, there exists an $x_0 \in (a, b)$ such that $f(d) - f(c) = (d - c) f'(x_0) = 0$ and hence $f(c) = f(d)$.
\end{proof}

\Cref{c:zero_derivative_constant} requires $f$ to be super order differentiable, and the following example shows that it fails if $f$ is only order differentiable. It also shows that a function can be order differentiable but not locally band preserving.

\begin{example}
Consider $E = \bR^\bN$, and define $f \colon (0, e) \to \bR^\bN$ by
\[ f((x_n)_{n \in \bN}) := 
\begin{cases}
0 & \text{ if } x_n \to \frac12, \\
e & \text{ if } x_n \not\to \frac12.
\end{cases} \]

Let $x \in (0, e)$. Define $r := (\frac{1}{n} \wedge (1 - x_n) \wedge x_n)_{n \in \bN}$. It follows that $(x - r, x + r) \subseteq (0, e)$, and for any $y \in (x - r, x+r)$, $\abs{x_n - y_n} < r_n \leq \frac{1}{n}$ holds for all $n \in \bN$, so that $y_n \to \frac12$ if and only if $x_n \to \frac12$. Therefore, $f$ is constant on $(x - r, x + r)$. By \Cref{p:differentiation_local} and the fact that constant functions have derivative $0$, $f$ is order differentiable on $(0, e)$ with $f' = 0$.

To see that $f$ is not locally band preserving, consider $e_1 := (1, 0, 0, \dots)$ and note that $\bP_{e_1}(\frac12 e_1) = \frac12 e_1 = \bP_{e_1}(\frac12e)$, but 
\[
\bP_{e_1}(f( \tfrac12 e_1)) = \bP_{e_1}(e) = e_1 \neq 0 = \bP_{e_1}(f(\tfrac12e)).
\]
\end{example}

\begin{remark}
    Note that for a Dedekind complete real $\Phi$-algebra $E$, with $a, b \in E$ such that $a \ll b$, and $f \colon [a,b] \to E$ that is order continuous and super order differentiable on $(a,b)$, \Cref{c:MVT_cor} implies the following characteristic properties for $f$. In statement $(ii)$ it is used that the product of two weak order units in a $\Phi$-algebra is a weak order unit. This fact follows from \cite[Theorem~142.5]{zaanen2} and \cite[Theorem~142.3]{zaanen2}.
    \begin{itemize}
        \item[$(i)$] If $f'(x) \ge 0$ for all $x \in (a,b)$, then $f$ is increasing on $[a,b]$, 
        \item[$(ii)$] if $f'(x) \gg 0$ for all $x \in (a,b)$, then  for any $c,d \in [a,b]$ with $c \ll d$ we have that $f(c) \ll f(d)$,
        \item[$(iii)$] if $f’(x)>0$ for all $x \in (a,b)$, then for any $c,d \in [a,b]$ with  $c \ll d$, we have that $f(c)<f(d)$, and
        \item[$(iv)$] if $f’(x) \gg 0$ for all $x \in (a,b)$, then for any $c,d \in [a,b]$ with $c<d$, we have that $f(c)<f(d)$.
    \end{itemize}
\end{remark}

Finally, we return to the complex case. To prove that a zero super order derivative implies a function is constant on order open neighbourhoods, we first prove a complex version of the Mean Value Theorem.

\begin{theorem}[Complex Mean Value Theorem] \label{p:complex_MVT}
Let $E$ be a Dedekind complete complex $\Phi$-algebra. Consider $c \in E$ and a weak order unit $r$. Let $f\colon N(c, r) \to E$ be super order differentiable. Then for any $a, b \in N(c, r)$, there exist $u, v \in \{a + x(b-a): x \in (0, e)\}$ such that
\begin{itemize}
\item[$(i)$]$\Re((b - a)f'(u))  = \Re(f(b) - f(a))$, and
\item[$(ii)$]$\Im((b-a)f'(v))  = \Im(f(b) - f(a))$.
\end{itemize}
\end{theorem}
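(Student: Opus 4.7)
The plan is to reduce to the real Mean Value Theorem (\Cref{t:MVT}) by parameterizing the segment from $a$ to $b$ by real scalars in $[0, e] \subseteq E_\rho$ and then taking real and imaginary parts. For $x \in [0, e]$ set $c_x := a + x(b - a)$ and define $g \colon [0, e] \to E$ by $g(x) := f(c_x)$. Before anything else I must verify that $c_x \in N(c, r)$ for every $x \in [0, e]$; this follows from $|c_x - c| \le (e - x)|a - c| + x|b - c|$ (the triangle inequality together with $|\lambda z| = \lambda|z|$ for $\lambda \in E_\rho^+$), combined with the observation that $(e - x)(r - |a - c|) + x(r - |b - c|) \ge (r - |a - c|) \wedge (r - |b - c|)$, the right-hand side being a weak order unit by \Cref{p:properties_bands}(vi), so \Cref{p:properties_bands}(vii) yields $|c_x - c| \ll r$.

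Following the chain-rule argument in the proof of \Cref{c:MVT_cor} essentially verbatim --- using the super order differentiability of $f$ at each $c_x$ together with the identity $|c_y - c_x| = |b - a||y - x|$ for $x, y \in [0, e]$ --- I would then establish that $g$ is order continuous on $[0, e]$ and super order differentiable on $(0, e)$ with $g'(x) = (b - a) f'(c_x)$. The key step of the argument is to define $h_\Re, h_\Im \colon [0, e] \to E_\rho$ by $h_\Re(x) := \Re(g(x))$ and $h_\Im(x) := \Im(g(x))$, and to show that each is order continuous on $[0, e]$ and super order differentiable on $(0, e)$, with derivatives $\Re(g'(x))$ and $\Im(g'(x))$ respectively. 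Order continuity is immediate from $|\Re(z)|, |\Im(z)| \le |z|$. For super order differentiability, the crucial observation is that $y - x \in E_\rho$ is real, so that $\Re((y - x) g'(x)) = (y - x) \Re(g'(x))$; taking real parts of the defining inequality for the super order derivative of $g$ and applying $|\Re(\cdot)| \le |\cdot|$ transfers the estimate to $h_\Re$, and the identical argument handles $h_\Im$.

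Finally, applying \Cref{t:MVT} to $h_\Re$ on $[0, e]$ (valid since $0 \ll e$ as $e$ is a weak order unit) yields $s \in (0, e)$ satisfying $e \cdot h_\Re'(s) = h_\Re(e) - h_\Re(0)$, which unpacks to $\Re((b - a) f'(c_s)) = \Re(f(b) - f(a))$. An analogous application to $h_\Im$ produces $t \in (0, e)$ with $\Im((b - a) f'(c_t)) = \Im(f(b) - f(a))$. Setting $u := c_s$ and $v := c_t$ completes the argument. I expect the main obstacle to be the verification that $h_\Re$ and $h_\Im$ inherit super order differentiability with the expected derivatives; this relies crucially on the parameter increment $y - x$ being real, and is precisely where the complex structure of $E$ interacts with the real structure used by \Cref{t:MVT}.
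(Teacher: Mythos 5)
Your proposal is correct and follows exactly the route the paper intends: the paper's proof is simply ``similar to that of \Cref{c:MVT_cor},'' i.e.\ parameterize the segment by real scalars in $[0,e]$, transfer super order differentiability to $g(x)=f(a+x(b-a))$ with $g'(x)=(b-a)f'(c_x)$, and apply the real Mean Value Theorem (\Cref{t:MVT}) separately to $\Re\circ g$ and $\Im\circ g$, using $|\Re(z)|,|\Im(z)|\le|z|$ and the fact that the increment $y-x$ is real so that $\Re((y-x)g'(x))=(y-x)\Re(g'(x))$. Your additional verification that $a+x(b-a)\in N(c,r)$ via \Cref{p:properties_bands}(vi) and (vii) is a detail the paper leaves implicit but is handled correctly.
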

\begin{proof}
The proof is similar to that of \Cref{c:MVT_cor}.
\end{proof}

\begin{corollary}
Let $E$ be a Dedekind complete complex $\Phi$-algebra. Consider $c \in E$ and $r$ a weak order unit. Let $f: N(c, r) \to E$ be super order differentiable with $f' = 0$. Then $f$ is constant.
\end{corollary}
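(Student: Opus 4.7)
The plan is to apply the Complex Mean Value Theorem (\Cref{p:complex_MVT}) directly. Given arbitrary $a, b \in N(c, r)$, the theorem furnishes points $u, v$ lying in the set $\{a + x(b - a) \colon x \in (0, e)\}$ such that $\Re((b - a)f'(u)) = \Re(f(b) - f(a))$ and $\Im((b - a)f'(v)) = \Im(f(b) - f(a))$. The hypothesis $f' = 0$ on $N(c, r)$ then immediately yields $\Re(f(b) - f(a)) = 0$ and $\Im(f(b) - f(a)) = 0$, whence $f(b) = f(a)$. Since $a$ and $b$ were arbitrary, $f$ is constant.

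For this argument to be valid, one must first verify that the points $u, v$ produced by \Cref{p:complex_MVT} actually lie in $N(c, r)$, so that the assertion $f'(u) = f'(v) = 0$ is meaningful. This amounts to a convexity property of $N(c, r)$: given $a, b \in N(c, r)$ and $x \in (0, e)$, the point $u := a + x(b - a) = (e - x)a + xb$ should satisfy $|u - c| \ll r$. The triangle inequality for the modulus, combined with the identity $|\alpha z| = \alpha |z|$ for $\alpha \in E_\rho^+$ and $z \in E$, yields
\[
|u - c| \leq (e - x)|a - c| + x|b - c|,
\]
and hence
\[
r - |u - c| \geq (e - x)(r - |a - c|) + x(r - |b - c|).
\]
Both $x$ and $e - x$ are weak order units since $x \in (0, e)$, while $r - |a - c|$ and $r - |b - c|$ are weak order units since $a, b \in N(c, r)$. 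As products of weak order units in a $\Phi$-algebra are again weak order units (by the remark following \Cref{c:MVT_cor}), the right-hand side is a sum of two positive weak order units, and is therefore itself a weak order unit. By \Cref{p:properties_bands}(vii), $r - |u - c|$ is a weak order unit, so $u \in N(c, r)$.

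Since the Complex Mean Value Theorem carries out all the substantive work, I do not anticipate any serious obstacle beyond the convexity check above, which is purely formal. The proof is therefore essentially a two-line deduction from \Cref{p:complex_MVT} once this membership issue is addressed.
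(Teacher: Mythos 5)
Your proof is correct and follows essentially the same route as the paper: apply the Complex Mean Value Theorem to arbitrary $a,b \in N(c,r)$ and conclude that the real and imaginary parts of $f(b)-f(a)$ vanish. Your additional verification that $N(c,r)$ is convex (so that $u,v$ indeed lie in the domain where $f'=0$) is a detail the paper leaves implicit, and your argument for it is sound.
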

\begin{proof}
Suppose that $a, b \in N(c, r)$. It follows from \Cref{p:complex_MVT} that there exist $u, v \in \{a + x(b-a): x \in (0, e)\}$ such that \[\Re(f(b) - f(a)) = \Re((b-a) f'(u)) = 0,
\]
\text{ and }
\[\Im (f(b) - f(a)) = \Im((b-a) f'(v) = 0.
\]
Thus, $f(a) = f(b)$.
\end{proof}

\noindent
{\bf Acknowledgements.} The authors would like to express their gratitude to the Erasmus$+$ ICM Grant for facilitating the productive visits between Leiden University and the University of Pretoria. We also thank Onno van Gaans for the fruitful discussions during these visits, Eugene Bilokopytov for his help in constructing \Cref{e:cont_bounded_fail}, and Denny Leung for providing \Cref{E:pointwise}.

\bibliographystyle{alpha}
\bibliography{refs.bib}

\begin{thebibliography}{Muj86}

\bibitem[AB06]{aliprantis}
C.D. Aliprantis and O.~Burkinshaw.
\newblock {\em Positive {O}perators}, volume 119.
\newblock Springer Science \& Business Media, 2006.

\bibitem[AZ18]{Aviles-Zapata}
A.~Avil\'{e}s and J.M. Zapata.
\newblock Boolean-valued models as a foundation for locally {$L^0$}-convex analysis and conditional set theory.
\newblock {\em J. Appl. Logics}, 5(1):389--419, 2018.

\bibitem[dP81]{depagter}
B.~de~{P}agter.
\newblock {\em $f$-algebras and Orthomorphisms}.
\newblock PhD thesis, Rijksuniversiteit te Leiden, 1981.

\bibitem[dS73]{Schipper}
W.J. de~Schipper.
\newblock A note on the modulus of an order bounded linear operator between complex vector lattices.
\newblock {\em Indag. Math.}, 35:355--367, 1973.
\newblock Nederl. Akad. Wetensch. Proc. Ser. A {\bf 76}.

\bibitem[EW98]{Wickstead-Zafer}
Z.~Ercan and A.W. Wickstead.
\newblock Towards a theory of nonlinear orthomorphisms.
\newblock In {\em Functional analysis and economic theory ({S}amos, 1996)}, pages 65--78. Springer, Berlin, 1998.

\bibitem[LZ71]{zaanen1}
W.A.J. Luxemburg and A.C. Zaanen.
\newblock {\em Riesz Spaces I}.
\newblock North-Holland Publishing Co., 1971.

\bibitem[Muj86]{Mujica}
J.~Mujica.
\newblock {\em Complex {A}nalysis in {B}anach {S}paces}, volume 120 of {\em North-Holland Mathematics Studies}.
\newblock North-Holland Publishing Co., Amsterdam, 1986.
\newblock Holomorphic functions and domains of holomorphy in finite and infinite dimensions, Notas de Matem\'atica, 107. [Mathematical Notes].

\bibitem[RS19]{series}
M.~Roelands and C.~Schwanke.
\newblock Series and power series on universally complete complex vector lattices.
\newblock {\em J. Math. Anal. Appl.}, 473(2):680--694, 2019.

\bibitem[RS25]{differentiation}
M.~Roelands and C.~Schwanke.
\newblock Differentiable, holomorphic, and analytic functions on complex phi-algebras.
\newblock {\em J. Math. Anal. Appl.}, 541(1):128671, 2025.

\bibitem[Zaa83]{zaanen2}
A.C. Zaanen.
\newblock {\em Riesz Spaces II}.
\newblock North-Holland Publishing Co., 1983.

\bibitem[Zaa97]{babyzaanen}
A.C. Zaanen.
\newblock {\em Introduction to Operator Theory in Riesz Spaces}.
\newblock Springer, 1997.

\end{thebibliography}

\end{document}